\documentclass[11pt]{article}
\usepackage{amsthm}
\usepackage{amssymb}
\usepackage{amsmath,enumerate}
\usepackage{comment}
\usepackage{thm-restate}
\usepackage{url} 
\usepackage{hyperref}
\usepackage{graphicx}
\usepackage{subfigure}
\usepackage[noabbrev,capitalise]{cleveref}
\usepackage[affil-it]{authblk}
\usepackage{color}
\usepackage[normalem]{ulem}

\usepackage{tikz}
\usetikzlibrary{positioning}
\usepackage[margin=1in]{geometry}
\usepackage{tikz}

\theoremstyle{plain}
\newtheorem{thm}{Theorem}[section]
\newtheorem{lem}[thm]{Lemma}

\newtheorem{cor}[thm]{Corollary}

\newtheorem{conj}[thm]{Conjecture}

\theoremstyle{plain} 
\newcommand{\thistheoremname}{}
\newtheorem{genericthm}[section]{\thistheoremname}

\theoremstyle{definition}

\def\es{\emptyset}
\def\less{\setminus}

\newcounter{counter}
 
\def\dfn#1{{\sl #1}}

\def\es{\emptyset}
\def\less{\setminus}

\newcommand{\ceil}[1]{{\left\lceil #1 \right\rceil}}

\title{Dominating  Hadwiger's Conjecture for graphs $G$ with $\alpha(G)=2$}
\author{Michael Scully\thanks{Supported in part by  NSF grant DMS-2153945.  E-mail address: {\tt  Michael.Scully@ucf.edu}.}\,\, and  Zi-Xia Song\thanks{Supported by NSF grant DMS-2153945.  E-mail address: {\tt Zixia.Song@ucf.edu}.}}
 
  \affil{ 
  { \small {Department  of Mathematics, University of Central Florida, Orlando, FL 32816, USA}}  
     }

 \date{}

 \begin{document}
\maketitle
\begin{abstract}
      Hadwiger's Conjecture from 1943 states that every graph with chromatic number $t$  contains a  $K_t$ minor.   Illingworth and Wood [arXiv:2405.14299]  introduced the concept  of  a ``dominating $K_t$ minor'' and asked whether   every graph with chromatic number $t$   contains a dominating $K_t$ minor.  This question is   a  substantial strengthening of   Hadwiger’s Conjecture.    Norin  referred to it as the ``Dominating Hadwiger's Conjecture'' and believes it is likely false.  In this paper   we first observe that a $t$-chromatic $G$ on $n$ vertices with independence number $\alpha(G)\le2$   contains a dominating  $K_t$ minor if and only if $G$ contains a dominating   $K_{\lceil{n/2}\rceil}$ minor. 
Building on this and using a deep result of Chudnovsky and Seymour on  packing seagulls,  we prove that every graph $G$ on $n$ vertices with   $\alpha(G)\le 2$ and  $2\omega(G)\ge \lceil{n/2}\rceil+1$ satisfies   the Dominating Hadwiger's Conjecture, where $\omega(G)$ denotes the clique number of $G$. We  further prove that  every $H$-free graph  $G$ with $\alpha(G)\le 2$ satisfies  the Dominating Hadwiger's Conjecture, where    $H\in\{2K_1+P_4,   K_2+2K_2,   K_2+(K_1\cup K_3), K_1+(K_1\cup K_5), W_5^<,  W_5^-, W_5,   K_7^<, K_7^-, K_7\}$ or  $H\ne K_2\cup K_3$ is a  graph on at most five vertices  such that  $\alpha(H)\le2$.

\end{abstract}
  
\baselineskip 16pt

\section{Introduction}

All graphs in this paper are finite and simple. For a graph $G$, we use $|G|$, $\alpha(G)$, $\chi(G)$, $\omega(G)$, $\delta(G)$ and $\Delta(G)$ to denote the number of vertices,    independence number, chromatic number,  clique number, minimum degree and maximum degree of $G$, respectively. The \dfn{complement} of $G$ is denoted by $\overline{G}$. Given a graph $H$, we say that $G$ is \dfn{$H$-free} if $G$ has no induced subgraph isomorphic to $H$.   The join $G + H$ (resp. union $G \cup H$) of two vertex-disjoint graphs $G$ and $H$ is the graph having vertex set $V(G) \cup V(H)$ and edge set $E(G) \cup E(H) \cup \{xy \mid x\in V(G), y \in V(H) \}$ (resp. $E(G) \cup E(H)$).  For a positive integer $k$, we use $kG$ to denote $k$ disjoint copies of the graph $G$. 
\medskip

   A graph  $G$ contains a  \dfn{$K_t$ minor} if  there exist pairwise disjoint non-empty connected subgraphs $T_1, \ldots, T_t$ of $G$, such that for $1 \leq i<j\leq t$, some vertex in $T_j$ has a neighbor in $T_i$.  Our work is motivated by the  celebrated Hadwiger's Conjecture~\cite{Had43}.

\begin{conj}[Hadwiger's Conjecture~\cite{Had43}]\label{c:HC} For every integer $t \geq 1$, every graph with no $K_t$ minor is $(t-1)$-colorable. 
\end{conj}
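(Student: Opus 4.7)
Hadwiger's Conjecture is a famous open problem, so a genuine proof is far beyond current reach; I can only outline the standard line of attack and describe where it breaks down. The plan would be induction on $t$ via a minimum counterexample: assume $G$ is contraction-critical, meaning $\chi(G) \geq t$, $G$ has no $K_t$ minor, and $|V(G)|+|E(G)|$ is minimum. Classical arguments of Dirac and Mader then force $G$ to be highly connected, with $\delta(G) \geq 2t-5$, and yield further structural constraints that one tries to exploit to produce the missing $K_t$ minor.

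The next step would be to apply the Robertson--Seymour graph minors structure theorem: every $K_t$-minor-free graph admits a tree-decomposition whose torsos are almost-embeddable in a surface of genus bounded by a function of $t$, with a bounded number of apex vertices and vortices of bounded depth. One would then attempt to $(t-1)$-color each piece, using a strong surface-coloring theorem in the spirit of the Four Color Theorem, and glue colorings across the clique-sums. This strategy has succeeded only for $t \leq 6$, where it reduces to the Four Color Theorem (Robertson--Seymour--Thomas), and similar reductions for $t=7$ have been attempted but remain open.

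The main obstacle is that the chromatic number of an almost-embeddable graph is not tightly controlled by the apex vertices and vortices: a single apex can push $\chi$ up by one, and gluing colorings on a shared clique forces agreement across all pieces simultaneously, which can only be avoided with list-coloring style arguments that introduce further slack. The best known general bounds for $K_t$-minor-free graphs are of the form $\chi(G) = O(t(\log\log t)^6)$ (Delcourt--Postle), improving the earlier $O(t\sqrt{\log t})$ bound of Norin--Postle--Song but still polynomially far from the conjectured $t-1$. For this reason the paper at hand does not attempt the conjecture in general, and instead focuses on its \emph{dominating} variant under the strong structural hypothesis $\alpha(G) \leq 2$, where the complement is triangle-free and packing/covering arguments (via the Chudnovsky--Seymour seagull theorem) become tractable.
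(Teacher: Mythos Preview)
Your assessment is correct in spirit: the paper states Hadwiger's Conjecture as a conjecture, not as a theorem, and offers no proof of it. There is therefore no ``paper's own proof'' against which to compare your proposal; the conjecture is cited only as motivation for the Dominating Hadwiger's Conjecture and the $\alpha(G)\le 2$ results that follow.

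Two small factual corrections to your summary of the state of the art, since they touch on what the paper itself records: the Delcourt--Postle bound is $O(t\log\log t)$, not $O(t(\log\log t)^6)$; and the $O(t\sqrt{\log t})$ bound is due to Kostochka and Thomason, while Norin--Postle--Song improved that to $O(t(\log t)^{1/4+\varepsilon})$. Also, Mader's bound for a contraction-critical graph is $\delta(G)\ge t-1$, not $2t-5$ (the quantity $2t-5$ arises in related but different extremal results). None of this affects your main point, which is that the conjecture is open and the paper does not attempt it in general.
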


Conjecture~\ref{c:HC} is  trivially true for $t\le3$, and reasonably easy for $t=4$, as shown independently by Hadwiger~\cite{Had43} and Dirac~\cite{Dirac52}. However, for $t\ge5$, Hadwiger's Conjecture implies the Four Color Theorem~\cite{AH77,AHK77}.   Wagner~\cite{Wagner37} proved that the case $t=5$ of Hadwiger's Conjecture is, in fact, equivalent to the Four Color Theorem, and the same was shown for $t=6$ by Robertson, Seymour and  Thomas~\cite{RST93}. Despite receiving considerable attention over the years, Hadwiger's Conjecture remains  open for $t\ge 7$ and is   widely considered among the most important problems in graph theory and has motivated numerous developments in graph coloring and graph minor theory. In the 1980s, Kostochka~\cite{Kostochka82,Kostochka84}  and Thomason~\cite{Thomason84} proved that the  upper bound on the chromatic number of graphs with no $K_t$ minor  is   $ O(t \sqrt{\log t})$,  which was improved to $O(t (\log t)^{1/4 + \epsilon})$ by Norin, Postle, and the first author~\cite{NPS23}, and very recently, to $ O(t\log\log t)$ by Delcourt and Postle \cite{DP25}.
.\medskip

  Recently, Illingworth and Wood~\cite{IW24}  introduced the notion of a  ``dominating $K_t$  minor''. A graph $G$ contains a \dfn{dominating $K_t$ minor} if there exists a  sequence $(T_1,\ldots,T_t)$ of pairwise disjoint non-empty connected subgraphs of $G$, such that for $1 \leq i<j\leq t$, every vertex in $T_j$ has a neighbor in $T_i$. Replacing ``every vertex in $T_j$'' by ``some vertex in $T_j$'' retrieves the standard definition of a $K_t$ minor  of $G$. 
 Illingworth and Wood~\cite{IW24}  explored in what sense dominating $K_t$ minors  behave like (non-dominating) $K_t$ minors: they observed   that the two notions are equivalent for $t\leq 3$, but are already very different for $t=4$, for instance, the $1$-subdivision of $K_n$ contains a $K_n$ minor for all $n \geq 4$, yet lacks a dominating $K_4$ minor. This led them to propose a natural strengthening of Hadwiger’s Conjecture:  \emph{is every graph  with no dominating $K_t$ minor  $(t-1)$-colorable?}  Norin~\cite{Norin24} referred to this as the ``Dominating Hadwiger's Conjecture'' and    suspects the conjecture is false.   
 
\begin{conj}[Dominating Hadwiger's Conjecture~\cite{IW24}]\label{c:dHC}
    For every integer $t \geq 1$, every graph with no dominating $K_t$ minor is $(t-1)$-colorable.
\end{conj}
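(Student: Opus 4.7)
The statement to be proved is Conjecture~\ref{c:dHC} in full generality, so let me be candid at the outset: the conjecture implies classical Hadwiger's Conjecture (since a dominating $K_t$ minor is in particular a $K_t$ minor), which is open for $t \geq 7$ and enjoys only the $O(t\log\log t)$ bound of Delcourt--Postle~\cite{DP25}; moreover Norin~\cite{Norin24} suspects the conjecture itself is false. Any genuine attempt therefore has to either (i) match known progress on Hadwiger's Conjecture while additionally controlling branch-set domination, or (ii) be reframed as a search for a counterexample. The plan below pursues (i) in structured special cases and then explains where (ii) takes over.

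My first move would be a reduction by induction on $|G|$: take a minimum counterexample $G$ with $\chi(G) = t$ and no dominating $K_t$ minor; by criticality $\delta(G) \geq t-1$ and $G$ has no clique cutset. I would stratify by $\alpha(G)$. For $\alpha(G)=1$ the statement is trivial, and for $\alpha(G) \leq 2$ the first observation advertised in the abstract collapses the problem to finding a dominating $K_{\lceil n/2 \rceil}$ minor, after which the Chudnovsky--Seymour seagull-packing theorem handles the positive instances worked out in this paper (namely $2\omega(G) \geq \lceil n/2 \rceil + 1$ and the listed forbidden-induced-subgraph families). For $\alpha(G) \geq 3$, the natural hope is a Duchet--Meyniel-style attack: pick an independent set $S$ (of size $\alpha(G)$, or a single low-degree vertex), apply the inductive hypothesis to $G-S$ or to a carefully chosen contraction, and then reinsert $S$ into the resulting minor so that \emph{every} vertex of each branch set is dominated by all earlier branch sets. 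The lift step would need to combine the fact that $S$ is dominating in $G$ with the inductive branch-set structure to rebuild domination.

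The main obstacle is precisely this lift: the $1$-subdivision example of~\cite{IW24} already breaks the analogous strategy at $t=4$, producing graphs with arbitrarily many $K_4$ minors but no dominating $K_4$ minor, and there is no known technique for tracking branch-set domination under vertex deletion or contraction in sufficient generality. This is exactly the obstruction that makes Norin believe the conjecture fails. My realistic expectation is therefore that the honest contribution of such a program is to delineate the regime where counterexamples must live --- graphs with $\alpha(G) \geq 3$ that avoid the forbidden-subgraph list of this paper and that nevertheless admit the Hadwiger bound on $\chi$ --- and to explicitly search for such counterexamples in structured families (Kneser graphs, Mycielski iterates, and higher-order $1$-subdivision analogues for $t \geq 5$), rather than to hope for a uniform proof. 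In short, the part I can plausibly push through is the $\alpha(G) \leq 2$ stratum already carried out by the seagull-packing framework; everything beyond that I expect to break, and the breakage is itself the interesting output.
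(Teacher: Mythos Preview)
The statement you were asked to prove is Conjecture~\ref{c:dHC}, which the paper does \emph{not} prove; it is stated as an open conjecture, and the paper's actual contribution is restricted to the special cases in Theorems~\ref{t:omega} and~\ref{t:main} for graphs with $\alpha(G)\le 2$. You correctly identify this from the outset, noting that the conjecture implies Hadwiger's Conjecture (open for $t\ge 7$) and that Norin suspects it to be false. Your sketch of the $\alpha(G)\le 2$ stratum via the Chudnovsky--Seymour seagull-packing theorem is exactly the route the paper takes for Theorem~\ref{t:omega}, and your candid conclusion that the general case is out of reach matches the paper's own scope. There is no gap to report: you were asked to prove an open problem and responded appropriately by delineating what is known and where the obstruction lies.
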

 
Conjecture~\ref{c:dHC} is  trivially true for $t\le3$.  Illingworth and Wood~\cite{IW24} verified the case $t=4$ by  showing  that every graph with no dominating $K_4$ minor  is $2$-degenerate and $3$-colorable. Norin~\cite{Norin24} announced a proof that every graph with no dominating $K_5$ minor  is $5$-degenerate.  More generally, Illingworth and Wood~\cite{IW24}  proved that 
every graph with no dominating $K_t$ minor  is $2^{t-2}$-colorable. 
  They further  provided two pieces of evidence for the Dominating Hadwiger's  
Conjecture: (1) It is true for almost every graph, (2) Every graph $G$ with no dominating $K_t$ minor has a $(t-1)$-colorable induced subgraph on at least half the vertices, which implies there is an independent set of size at least $\ceil{|G|/(2t-2)}$.  \medskip

The \dfn{dominating Hadwiger number} $h_d(G)$ of a graph $G$ is the largest integer $t$ such that $G$ contains a dominating $K_t$ minor.  Very recently, in their attempt  to find potential counterexamples, Tibbetts and the second author~\cite{SongTibbetts25} proved that  the Dominating Hadwiger's Conjecture holds for all $2K_2$-free graphs. 

\begin{thm}[Song and Tibbetts~\cite{SongTibbetts25}]\label{t:2K2} 
 Every $2K_2$-free graph $G$  satisfies $h_d(G) \geq \chi(G)$.   
 \end{thm}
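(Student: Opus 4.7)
The plan is to proceed by induction on $n = |V(G)|$, writing $k = \chi(G)$. When $k \leq 2$ the statement is immediate, since any graph with at least one edge already admits a dominating $K_2$ minor. So I may assume $k \geq 3$ and that the theorem is known for all $2K_2$-free graphs on fewer vertices.

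The structural property I would exploit is the following: for every edge $uv \in E(G)$, the set $I(u,v) := V(G) \setminus (N[u] \cup N[v])$ is an independent set, since an edge $xy$ entirely inside $I(u,v)$ would together with $uv$ induce a $2K_2$. Using this, I first dispatch the case in which $G$ has a universal vertex $w$. Then $G - w$ is $2K_2$-free and $\chi(G-w) = k-1$, so by induction $G-w$ has a dominating $K_{k-1}$ minor $(T_1, \ldots, T_{k-1})$; appending $T_k := \{w\}$ completes a dominating $K_k$ minor of $G$, because $w$ is adjacent to every vertex of every $T_i$.

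For the remaining case, in which $G$ has no universal vertex, I would choose an edge $uv$ of $G$, set $I = I(u,v)$, and let $H = G - I$. Since $I$ is independent, at most one color is used on $I$, and hence $\chi(H) \geq k-1$. If $\chi(H) = k$, applying induction to $H$ at once furnishes a dominating $K_k$ minor inside $G$, and we are done. Otherwise $\chi(H) = k-1$; induction on $H$ then yields a dominating $K_{k-1}$ minor $(T_1, \ldots, T_{k-1})$ of $H$, and the task becomes to promote it to a dominating $K_k$ minor of $G$ using the independent set $I$ together with $u$ and $v$.

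The main obstacle is exactly this augmentation. Each vertex of $I$ is non-adjacent to every other vertex of $I$ and to both $u$ and $v$, so any new branch $T_k$ must be held together by some vertex of $H$ through which $I$ is attached, and simultaneously every vertex of $T_k$ must have a neighbor in each earlier branch. To manage this I would strengthen the inductive hypothesis so that the dominating $K_{k-1}$ minor of $H$ can be produced while leaving a prescribed vertex (typically $u$ or $v$) unused in any branch, thereby reserving a hub to attach $I$. Making this rigorous requires a judicious choice of $uv$ --- for instance, one maximizing $|N[u] \cup N[v]|$, or chosen so that $\{u,v\}$ aligns with a smallest color class of $H$ --- and the core technical work will be to verify that this strengthened statement survives both the universal-vertex reduction and the recursive step, and that every vertex of $I$ truly has a neighbor in every $T_i$. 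This last verification is where I expect the full force of the $2K_2$-free hypothesis to be needed, beyond the single observation that $I(u,v)$ is independent.
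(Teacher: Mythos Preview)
This theorem is not proved in the paper at hand; it is quoted from Song and Tibbetts~\cite{SongTibbetts25} and used as a black box in the proof of Theorem~\ref{t:main}. There is therefore no proof here against which to compare your attempt.

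As for the proposal itself, there is a genuine gap at the augmentation step, and the strengthening you suggest does not close it. You propose to reserve $u$ or $v$ from the branches of the $K_{k-1}$ minor in $H$ so that it can serve as a hub attaching $I$ to form $T_k$. But by definition every vertex of $I = V(G)\setminus (N[u]\cup N[v])$ is non-adjacent to both $u$ and $v$, so neither of them can connect any vertex of $I$ into a branch; any hub would have to lie in $H\setminus\{u,v\}$, and nothing in your argument produces one. More seriously, even granting connectivity of $T_k$, the domination condition requires every vertex $x\in T_k \cap I$ to have a neighbour in each $T_i$ with $i<k$. The $2K_2$-free hypothesis tells you only that $I$ is independent; it gives no control over the edges from $I$ into the individual branches $T_1,\ldots,T_{k-1}$ sitting inside $H$. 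A branch $T_i$ could well lie entirely in $V(G)\setminus N(x)$ for some fixed $x\in I$, and then $x$ cannot be placed in $T_k$ (nor in any $T_j$ with $j>i$). A ``judicious choice of $uv$'' does not prevent this failure mode. A working argument will need a different organizing idea than peeling off the independent set outside a single edge.
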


In the same paper Illingworth and Wood~\cite{IW24} also asked the following natural question: \emph{Does every $n$-vertex graph with $\alpha(G)=2$ have a dominating $K_t$ minor  with $t\geq \ceil{\frac{n}{2}}$?}  They further observed  that if $\alpha(G)=2$,  then $G$ has a dominating $K_t$ minor, where $t \geq \ceil{\frac{n}{3}}$. 
 It is known that    $ \chi(G) \cdot  \alpha(G)\ge  |G|  $ for every graph $G$. It is then natural to ask whether the following  weaker  conjecture holds. 

\begin{conj}\label{c:dDM}
    Every graph $G$ on $n$ vertices satisfies $h_d(G) \ge \ceil{n/\alpha(G)} $.
\end{conj}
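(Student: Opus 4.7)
The plan is to approach Conjecture~\ref{c:dDM} by focusing first on the case $\alpha(G)\le 2$, which is the setting of this paper. The case $\alpha(G)=1$ is trivial since then $G=K_n$ and $h_d(G)=n$. For $\alpha(G)=2$, the conjecture becomes $h_d(G)\ge \lceil n/2\rceil$. By the observation cited in the abstract (the equivalence of a dominating $K_t$ minor and a dominating $K_{\lceil n/2\rceil}$ minor for $t$-chromatic graphs with $\alpha\le 2$), this instance of Conjecture~\ref{c:dDM} is in fact equivalent to the Dominating Hadwiger's Conjecture restricted to $\alpha(G)\le 2$. Consequently, the theorems summarized in the abstract (the clique-number bound $2\omega(G)\ge\lceil n/2\rceil+1$ and the various $H$-free results) already confirm Conjecture~\ref{c:dDM} in a large portion of the $\alpha(G)=2$ range.

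To attempt the $\alpha(G)=2$ case without extra hypotheses, I would try to construct a dominating $K_{\lceil n/2\rceil}$ minor directly by partitioning $V(G)$ into pairs. Since $\alpha(G)=2$ makes $\overline{G}$ triangle-free, classical results produce a near-perfect matching in $G$, supplying $\lceil n/2\rceil$ candidate branch sets. The delicate step is to order these pairs $T_1,\ldots,T_t$ so that for every $i<j$, each vertex of $T_j$ has a neighbor in $T_i$. The non-adjacencies in a graph with $\alpha\le 2$ are highly structured (triangle-free complement), and I would invoke the Chudnovsky--Seymour packing seagulls theorem -- the key tool highlighted in this paper's abstract -- to produce a matching together with an ordering compatible with the dominating condition.

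For general $\alpha$, the analogous plan is to partition $V(G)$ into $\lceil n/\alpha\rceil$ connected subgraphs of size at most $\alpha$ (say, cliques of size $\alpha$ where possible) and then reorder them to satisfy the dominating condition. The main obstacle, both in the $\alpha=2$ case and in the general case, is the ordering step itself. Merely producing a $K_t$ minor with branch sets of the right size is comparatively easy; demanding that \emph{every} vertex of each later branch set has a neighbor in every earlier branch set is far more restrictive, and it is exactly this rigidity that leads Norin to suspect the full Dominating Hadwiger's Conjecture may be false. Conjecture~\ref{c:dDM} is the weaker averaged form that may still hold, and the hope is that with branch sets as large as $\alpha$ available, there is enough slack to negotiate this obstacle.
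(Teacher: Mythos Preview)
The statement you are addressing, Conjecture~\ref{c:dDM}, is posed in the paper as an open problem and is \emph{not} proved there; hence there is no ``paper's own proof'' to compare against, and your submission is (appropriately) a strategy outline rather than a proof.

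There is, however, a concrete gap in your plan for $\alpha(G)=2$. You propose to partition $V(G)$ into $\lceil n/2\rceil$ matching edges and then search for a compatible ordering. But an edge $xy$ has the property that every other vertex sees $x$ or $y$ only when $xy$ is a \emph{dominating} edge, and in the paper's analysis of a minimal counterexample (Claim~\ref{d-edge} in the proof of Theorem~\ref{t:main}) no dominating edge exists. Thus no ordering of a matching can yield a dominating clique minor in that setting; the obstruction is not the ordering step but the branch sets themselves. This is exactly why seagulls are the right object: if $uvw$ is an induced $P_3$ with $uw\notin E(G)$, then $\alpha(G)=2$ forces every other vertex to be adjacent to $u$ or $w$, so a seagull automatically dominates. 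Disjoint seagulls, however, occupy three vertices each, so one cannot pack $\lceil n/2\rceil$ of them into $n$ vertices. Theorem~\ref{t:omega} resolves this tension by using only $\ell=\lceil n/2\rceil-\omega(G)$ seagulls together with the vertices of a maximum clique, and the hypothesis $2\omega(G)\ge\lceil n/2\rceil+1$ is precisely what makes the vertex budget balance. Without that hypothesis the argument does not close, and the Chudnovsky--Seymour theorem you cite only certifies the existence of $\ell$ disjoint seagulls---it says nothing about ordering a matching. This is why Conjecture~\ref{c:dDM} remains open even for $\alpha=2$.
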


   We first observe that Conjecture~\ref{c:dHC} and Conjecture~\ref{c:dDM} are equivalent for graphs with independence number at most two.  Following the ideas of Plummer, Stiebitz and Toft in~\cite{PST03}, we provide a proof here for completeness. 
 
\begin{thm}\label{t:equiv}
    Let $G$ be a graph on $n$ vertices with $\alpha(G) \leq 2$. Then  
   \[h_d(G) \geq \chi(G)  \text{ if and only if }   h_d(G) \geq \ceil{n/2}.\]
    
\end{thm}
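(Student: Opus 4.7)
The forward implication is immediate: since $\alpha(G) \le 2$, we have $\chi(G) \ge \lceil n/\alpha(G) \rceil \ge \lceil n/2 \rceil$, and so $h_d(G) \ge \chi(G)$ yields $h_d(G) \ge \lceil n/2 \rceil$.

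For the backward implication, the plan is to argue by strong induction on $n$. Assume $h_d(G) \ge \lceil n/2 \rceil$; the case $\chi(G) \le \lceil n/2 \rceil$ is immediate, so focus on $\chi(G) > \lceil n/2 \rceil$. Because $\alpha(G) \le 2$, every color class of an optimal coloring of $G$ has size at most $2$, so such colorings correspond to maximum matchings in $\overline{G}$ and $\chi(G) = n - \mu(\overline{G})$; in particular, $\chi(G) > \lceil n/2 \rceil$ is equivalent to $\mu(\overline{G}) < \lfloor n/2 \rfloor$. Fix a maximum matching $M$ of $\overline{G}$ and let $U$ denote its unmatched set. By maximality, $U$ is independent in $\overline{G}$ and hence a clique in $G$ of size $|U| = 2\chi(G) - n \ge 1$; moreover, triangle-freeness of $\overline{G}$ (from $\alpha(G) \le 2$) forces each $u \in U$ to be $G$-adjacent to at least one endpoint of every matching edge $\{a,b\} \in M$, since $\{u,a,b\}$ cannot form a triangle in $\overline{G}$.

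The plan is then to pick $u \in U$, apply the induction hypothesis to $G - u$, and lift the resulting minor back to $G$ by appending $\{u\}$ as an extra branch set. Since $u$ is exposed by $M$, $\mu(\overline{G} - u) = \mu(\overline{G})$ and so $\chi(G - u) = \chi(G) - 1$; the induction hypothesis then produces a dominating $K_{\chi(G)-1}$ minor of $G - u$, and appending $\{u\}$ as a final branch set yields a dominating $K_{\chi(G)}$ minor of $G$, provided $u$ has a $G$-neighbor in every pre-existing branch set $T$. If some $T$ fails this, then $T \subseteq N_{\overline{G}}(u)$; but $N_{\overline{G}}(u)$ is independent in $\overline{G}$ (triangle-freeness again), hence a clique in $G$, so a short exchange argument using $\alpha(G) \le 2$ swaps one vertex of $T$ for a $G$-neighbor of $u$ drawn from $V \setminus U$ while preserving both connectedness of $T$ and the dominating conditions on earlier branch sets. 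It is convenient first to reduce to $G$ connected and not a non-trivial join: a disconnected $G$ with $\alpha(G) \le 2$ is forced to equal $K_a \cup K_b$ and is handled immediately, while a non-trivial join $G_1 + G_2$ splits via $h_d(G_1 + G_2) \ge h_d(G_1) + h_d(G_2)$ and $\chi(G_1 + G_2) = \chi(G_1) + \chi(G_2)$.

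The main obstacle will be the exchange step together with the verification that the induction hypothesis is applicable to $G - u$. The exchange has to produce a vertex in $N_G(u) \setminus T$ whose insertion into $T$ preserves both connectedness of $T$ and domination of $T$ by each earlier branch set; the clique structures of $U$ and of $N_{\overline{G}}(u)$, together with $\alpha(G) \le 2$, are expected to be the key ingredients, mirroring the style of the Plummer--Stiebitz--Toft argument in the non-dominating case. For the induction to close, one needs $h_d(G - u) \ge \lceil (n-1)/2 \rceil$; when $n$ is even this bound equals $\lceil n/2 \rceil$, so one may instead delete two vertices of $U$ simultaneously (which drops $\chi(G)$ by $2$ and $\lceil n/2 \rceil$ by $1$) or refine the induction so that hypothesis-preservation is automatic.
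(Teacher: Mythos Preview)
The forward direction and the matching-based framing are correct, but your backward-direction plan diverges substantially from the paper's argument and contains two genuine gaps that you have not closed.

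First, the ``short exchange argument'' is not short. Suppose $T_j$ is a bad branch set, so $T_j \subseteq N_{\overline{G}}(u)$, and you swap some $w\in T_j$ for some $v\in N_G(u)$. For the modified set $T_j'=(T_j\setminus\{w\})\cup\{v\}$ to sit in a dominating minor you need: (i) $v$ lies in no other branch set; (ii) $v$ is adjacent to $T_j\setminus\{w\}$; (iii) $v$ has a neighbour in every $T_i$ with $i<j$; and, crucially, (iv) every vertex of every $T_k$ with $k>j$ still has a neighbour in $T_j'$. You mention only ``the dominating conditions on earlier branch sets,'' ignoring (iv). Moreover there may be many bad branch sets (as many as $|N_{\overline{G}}(u)|\le \mu(\overline{G})$), and repairing one can destroy (iv) for another. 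The clique structure of $N_{\overline{G}}(u)$ by itself does not supply a vertex $v$ satisfying (i)--(iv), and nothing in your outline produces one.

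Second, and more fundamentally, you never verify that the induction hypothesis applies to $G-u$. From $h_d(G)\ge\lceil n/2\rceil$ one only gets $h_d(G-u)\ge h_d(G)-1\ge \lceil n/2\rceil-1$, which equals $\lceil(n-1)/2\rceil$ when $n$ is odd but is one short when $n$ is even. Your proposed fix of deleting two vertices of $U$ has the same defect: the general bound gives $h_d(G-u_1-u_2)\ge \lceil n/2\rceil-2$, again one short of $\lceil(n-2)/2\rceil$. ``Refine the induction'' is not a plan.

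The paper sidesteps both problems by never trying to build the minor directly. It takes a minimum counterexample, shows it is $\chi$-critical with $\overline{G}$ connected, and then invokes Gallai's 1963 theorem that a $k$-critical graph with connected complement has at least $2k-1$ vertices. Combined with $n\le 2\chi(G)-1$ (immediate from $\chi(G)>\lceil n/2\rceil$), this forces $n=2\chi(G)-1$, hence $\chi(G)=\lceil n/2\rceil$, contradicting the counterexample assumption. Gallai's theorem is the key structural ingredient your approach is missing.
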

\begin{proof}
      Since $\alpha(G)\le2$, we see that $\chi(G)\ge \ceil{n/2}$.  Suppose  $h_d(G) \geq \chi(G)$. Then  $h_d(G) \geq \chi(G)\ge\ceil{n/2}$. Conversely,  suppose $h_d(G) \geq \ceil{n/2}$ but $h_d(G) < \chi(G)$. We choose $G$ with $n$ minimum. Since $\chi(G)\ge \ceil{n/2}$,  we have $n\le 2\chi(G)$. If there exists a vertex $x\in V(G)$ such that  $\chi(G \less x) = \chi(G)$,   then  by the minimality of $n$,    $h_d(G)\ge h_d(G \less x)\ge \chi(G\less x)=\chi(G)$, a contradiction.   This proves that   $G$ is $\chi(G)$-critical.  Suppose  $n = 2\chi(G)$. Let $x\in V(G)$. By the minimality of $n$,     
\[h_d(G)\ge h_d(G\less x)\ge \chi(G \less x) \ge \ceil{ (n-1)/2} =\chi(G),\]
 a contradiction. Thus  $n \leq 2\chi(G) - 1$.  

Suppose next  $\overline{G}$ is disconnected.  Then $G$  is the join of  two  disjoint graphs, say  $G_1$ and $G_2$. By the minimality of $n$, $h_d(G_i) \ge  \chi(G_i)$ for each $i\in[2]$. It follows that  \[h_d(G)\ge h_d(G_1)+h_d(G_2)\ge \chi(G_1)+\chi(G_2)=\chi(G),\] 
a contradiction.  Thus $\overline{G}$ is connected.    By a deep result of Gallai~\cite{Gallai63} on the order of $\chi(G)$-critical graphs, we have $n = 2\chi(G) - 1$.    But then $h_d(G)<\chi(G)=  \ceil{n/2}$, contrary to the assumption that $h_d(G)\ge  \ceil{n/2}$. 
\end{proof}

We now focus on the Dominating Hadwiger's Conjecture  for graphs $G$ with $\alpha(G) \le 2$.   The complement of   such graphs are triangle-free.  As Plummer, Stiebitz and Toft point out in \cite{PST03}, this is a mild restriction considering the wide variety of triangle-free graphs.   In his survey, Seymour~\cite{Seymoursurvey}  remarks:
\begin{quote}
``This seems to me to be an excellent place to look for a counterexample. My own belief is, if it is true for graphs with stability number two then it is probably true in general, so it would be very nice to decide this case."
\end{quote}
Graphs with $\alpha(G)\le 2$ thus offers a promising setting to explore the Dominating Hadwiger's Conjecture.   Combining Theorem~\ref{t:equiv} with a deep result of   Chudnovsky and Seymour~\cite{seagull} (see Theorem~\ref{t:seagulls} in Section~\ref{s:omega}) on  packing seagulls, we prove that the Dominating Hadwiger's Conjecture   holds for graphs $G$ with $\alpha(G)\le 2$ and $2\omega(G)\ge \ceil{|G|/2}+1$. This extends the deep result of  Chudnovsky and Seymour~\cite[Theorem 1.3]{seagull} for Hadwiger's Conjecture. \medskip

\begin{restatable}{thm}{Clique}\label{t:omega}
 Let $G$ be a graph on $n$ vertices with $\alpha(G)\le 2$.  If $2\omega(G)\ge \ceil{n/2}+1$, then  $h_d(G) \geq \lceil n/2\rceil$, and so $G$ satisfies the Dominating Hadwiger's Conjecture. 
 \end{restatable}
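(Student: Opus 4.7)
By Theorem~\ref{t:equiv} it suffices to construct a dominating $K_{\lceil n/2 \rceil}$-minor in $G$. Fix a maximum clique $C$ of $G$, set $\omega := |C| = \omega(G)$, and write $k := \lceil n/2 \rceil - \omega \ge 0$. The hypothesis $2\omega \ge \lceil n/2 \rceil + 1$ readily yields $n - \omega \ge 3k$, with equality possible when $n$ is odd. The plan is to exhibit $k$ pairwise vertex-disjoint seagulls (induced $P_3$'s) $S_1, \dots, S_k$ inside $G - C$ and then take as an ordered family of branch sets
\[
(T_1, \dots, T_{k+\omega}) := \bigl(S_1, \dots, S_k, \{c_1\}, \dots, \{c_\omega\}\bigr),
\]
where $C = \{c_1, \dots, c_\omega\}$. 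Since $k + \omega = \lceil n/2 \rceil$, this produces a minor of the correct order.

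Dominance is then forced by $\alpha(G) \le 2$. For each seagull $S_i = u_i v_i w_i$ the endpoints $\{u_i, w_i\}$ form an independent pair, so any $x \in V(G) \setminus S_i$ must be adjacent to $u_i$ or $w_i$; otherwise $\{u_i, w_i, x\}$ would be an independent triple. Applied to all $i < j$, this single observation covers the three cases: every vertex of a later seagull $S_j$ has a neighbor in $S_i$; every clique singleton $\{c_j\}$ has a neighbor in $S_i$; and distinct clique singletons are joined by an edge of $C$. Hence $(T_1, \dots, T_{k+\omega})$ is a dominating $K_{\lceil n/2 \rceil}$-minor.

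The main work, and the main obstacle, is producing the $k$ seagulls inside $G - C$. Here I would invoke the Chudnovsky--Seymour packing-seagulls theorem (Theorem~\ref{t:seagulls}) on $H := G - C$, which satisfies $\alpha(H) \le 2$ and $|H| \ge 3k$. The raw vertex count is right, but any additional structural or clique-type condition in Theorem~\ref{t:seagulls} must be verified; the tight regime (particularly $n$ odd with $2\omega = \lceil n/2 \rceil + 1$ and $|H| = 3k$) leaves no slack. The hard part will be handling the extremal configurations in which $H$ contains a near-maximum clique or other obstruction to a seagull packing: I expect that either a short exchange argument, swapping a small number of vertices between $C$ and $H$ while preserving $|C| + k = \lceil n/2 \rceil$, or a direct appeal to the finer structural information in Theorem~\ref{t:seagulls}, will rule these out and complete the proof.
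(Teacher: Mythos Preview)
Your overall strategy---take a maximum clique $C$, find $k=\lceil n/2\rceil-\omega$ disjoint seagulls in $H:=G-C$, and order the branch sets seagulls-first---is exactly the paper's approach, and your dominance argument via the endpoints of each seagull is correct. But the proof is incomplete precisely where you concede it is: you verify only condition~(i) of Theorem~\ref{t:seagulls}, and checking the remaining hypotheses for $H$ is the entire substance of the argument, not a detail to be waved away. You must show that $H$ is $k$-connected, that every clique of $H$ has capacity at least $k$, that $H$ has an anti-matching of size $k$, and that $H\ne W_5$ when $k=2$.

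The paper (after a routine minimal-counterexample reduction to $n$ odd, so that $n=2\omega+2k-1$) verifies each of these directly. For connectivity: a cut $S$ of size $<k$ in $H$ leaves two clique components, and since $\alpha(G)\le2$ one can partition $C$ between them to produce a clique in $G$ of size at least $\lceil(n-|S|)/2\rceil>\omega$, a contradiction. For the anti-matching: a maximum anti-matching of size $\le k-1$ leaves a clique of size $|H|-2(k-1)=\omega+1$ in $H$. For capacity: one shows $|K|-|K^*|\le\omega-1$ for every clique $K$ of $H$ (using that $H$ is connected when $|K|=\omega$), whence the capacity is at least $(|H|-\omega+1)/2=k$. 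The $W_5$ exception is killed by $R(3,4)=9$. Your proposed ``exchange argument'' is not what is needed; the key device is rather that any failure of a hypothesis of Theorem~\ref{t:seagulls} forces a clique in $G$ strictly larger than $\omega$, and the partition-of-$C$ trick in the connectivity step is the only place anything resembling an exchange occurs.
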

  
Theorem~\ref{t:omega} combined with small Ramsey numbers immediately implies the following:

\begin{cor}\label{c:smallclique} Let $G$ be a graph  on $n$ vertices with $\alpha(G)\le2$. If   $\omega(G)\le 6$, then $h_d(G)\ge \ceil{n/2}$.
\end{cor}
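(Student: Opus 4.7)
The plan is to invoke Theorem~\ref{t:omega}, which already guarantees $h_d(G)\ge \ceil{n/2}$ whenever $\alpha(G)\le 2$ and $2\omega(G)\ge \ceil{n/2}+1$. The task therefore reduces to verifying that this numerical hypothesis is automatic under the additional assumption $\omega(G)\le 6$. The reason this should work is that the combined hypotheses $\alpha(G)\le 2$ and $\omega(G)\le 6$ force $G$ to be very small, after which $2\omega(G)\ge\ceil{n/2}+1$ becomes a routine arithmetic check.

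The first step is to bound $n$ from above using Ramsey theory. Since $\alpha(G)\le 2$, the graph $G$ contains no independent set of size $3$, and so by the definition of the Ramsey number,
\[ n \le R(\omega(G)+1,\, 3) - 1. \]
The second step is to substitute the known values of $R(k,3)$ for small $k$, namely
\[ R(3,3)=6,\quad R(4,3)=9,\quad R(5,3)=14,\quad R(6,3)=18,\quad R(7,3)=23. \]
Writing $\omega:=\omega(G)$, this yields for $\omega\in\{2,3,4,5,6\}$ the respective bounds $n\le 5,\,8,\,13,\,17,\,22$. The third step is a direct arithmetic verification of $2\omega\ge \ceil{n/2}+1$ in each case; for example, when $\omega=6$ one has $\ceil{n/2}+1\le \ceil{22/2}+1=12=2\omega$, and the analogous (tight) inequalities hold for the smaller values of $\omega$. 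The degenerate cases $\omega\le 1$ are trivial: $\omega(G)\le 1$ forces $G$ to be edgeless, which combined with $\alpha(G)\le 2$ gives $n\le 2$, and the inequality $2\omega\ge\ceil{n/2}+1$ is again satisfied.

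Once $2\omega(G)\ge\ceil{n/2}+1$ has been verified in every case, Theorem~\ref{t:omega} immediately delivers $h_d(G)\ge\ceil{n/2}$, which is the desired conclusion. There is no substantive obstacle in this argument: the proof is a short case check whose only non-trivial ingredients are Theorem~\ref{t:omega} itself and the classical table of small Ramsey numbers $R(k,3)$ for $k\le 7$.
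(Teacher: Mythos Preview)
Your proof is correct and follows essentially the same approach as the paper: bound $n$ via the Ramsey numbers $R(3,\omega(G)+1)$, verify arithmetically that $2\omega(G)\ge\ceil{n/2}+1$, and apply Theorem~\ref{t:omega}. The only cosmetic difference is that the paper compresses the case analysis into the single inequality $R(3,\omega(G)+1)\le 4\omega(G)-1$ for $\omega(G)\le 6$, from which $n\le 4\omega(G)-2$ and hence $2\omega(G)\ge\ceil{n/2}+1$ follow directly, whereas you check the five values of $\omega$ individually.
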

\begin{proof} Since  the Ramsey number $R(3, \omega(G)+1))\le 4\omega(G)-1$ when $\omega(G)\le 6$, we see that $n\le 4\omega(G)-2$. Then $2\omega(G)\ge \ceil{n/2}+1$. By Theorem~\ref{t:omega},  $h_d(G)\ge \ceil{n/2}$.
\end{proof}

\begin{cor}\label{c:smalln} Let $G$ be a graph  on $n$ vertices with $\alpha(G)\le2$. If   $n\le 26$, then $h_d(G)\ge \ceil{n/2}$.
\end{cor}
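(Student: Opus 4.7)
The plan is a straightforward case analysis that reduces Corollary~\ref{c:smalln} to either Corollary~\ref{c:smallclique} (for small clique number) or directly to Theorem~\ref{t:omega} (for larger clique number). The only thing we need to check is that for $n\le 26$, the hypothesis of Theorem~\ref{t:omega} is automatically satisfied once $\omega(G)$ is large enough, and that the small-$\omega(G)$ case is already covered.

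First I would split on the value of $\omega(G)$. If $\omega(G)\le 6$, then Corollary~\ref{c:smallclique} immediately yields $h_d(G)\ge \ceil{n/2}$, so this case is done. Otherwise, assume $\omega(G)\ge 7$. Then
\[
2\omega(G)\ \ge\ 14\ \ge\ \ceil{26/2}+1\ \ge\ \ceil{n/2}+1,
\]
since $n\le 26$ implies $\ceil{n/2}\le 13$. Therefore the hypothesis of Theorem~\ref{t:omega} is met, and applying it gives $h_d(G)\ge \ceil{n/2}$.

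I do not expect any genuine obstacle here: Corollary~\ref{c:smallclique} already packages the Ramsey-number bookkeeping that handles $\omega(G)\le 6$, and the arithmetic needed for $\omega(G)\ge 7$ is trivial. The cutoff $n=26$ in the statement is essentially the largest $n$ for which $\omega(G)\ge 7$ still forces $2\omega(G)\ge \ceil{n/2}+1$, so the two cases dovetail exactly at $n=26$. The whole proof should occupy only a few lines.
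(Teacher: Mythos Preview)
Your proposal is correct and follows essentially the same approach as the paper: split according to whether $\omega(G)\le 6$ (handled by Corollary~\ref{c:smallclique}) or $\omega(G)\ge 7$ (handled by Theorem~\ref{t:omega} via $2\omega(G)\ge 14\ge \ceil{n/2}+1$). The paper phrases the identical argument by contradiction rather than as a direct case split, but the content is the same.
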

\begin{proof} Suppose $h_d(G)< \ceil{n/2}$. By Theorem~\ref{t:omega},   $2\omega(G)\le \ceil{n/2} $.  By Corollary~\ref{c:smallclique},   $\omega(G)\ge 7$.    But then $14\le2\omega(G)\le\ceil{n/2} $ and so $n\ge27$,  contrary to the assumption that $n\le 26$.  
\end{proof}

  Let $K_k^-$ denote the graph obtained from the complete graph $K_k$ by deleting a single edge, and let $K_k^{<}$ denote the graph obtained by deleting two adjacent edges from $K_k$. The \dfn{wheel} $W_k$ on $k+1$ vertices is defined as the graph obtained from the cycle $C_k$ of length $k$ by adding a new vertex $x$ adjacent to every vertex of $C_k$. We denote by $W_k^-$ the graph obtained from $W_k$ by deleting one edge incident to $x$, and by $W_k^{<}$ the graph obtained by deleting two consecutive edges incident to $x$. Finally, we use $P_k$  to denote the path  on  $k$ vertices.   In our search for potential counterexamples,  we   further prove the following main result. 

\begin{restatable}{thm}{GraphH}\label{t:main}
 Let $G$ be a graph on $n$ vertices with $\alpha(G)\le 2$. Let $H$ be a graph   isomorphic to one of the graphs in  $\{W_4^-, W_4, 2K_1+P_4,  K_2+2K_2,  K_2+(K_1\cup K_3), W_5^<,  W_5^-, W_5,   K_7^<, K_7^-, K_7, K_1+(K_1\cup K_5) \}$.   If $G$ is $H$-free, then $h_d(G) \geq \lceil n/2\rceil$.
 \end{restatable}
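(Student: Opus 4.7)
The plan is to proceed by induction on $n=|G|$, using Theorem~\ref{t:omega} as the main reduction. The base cases $n\le 26$ are disposed of by Corollary~\ref{c:smalln}, so we assume $n\ge 27$; Corollary~\ref{c:smallclique} then lets us assume $\omega(G)\ge 7$. The goal is to verify the hypothesis $2\omega(G)\ge\ceil{n/2}+1$ of Theorem~\ref{t:omega}, after which $h_d(G)\ge\ceil{n/2}$ follows immediately. Throughout it is convenient to pass to the complement: $\overline{G}$ is triangle-free (since $\alpha(G)\le 2$), and $G$ is $H$-free if and only if $\overline{G}$ is $\overline{H}$-free; each $\overline{H}$ in our list is a small graph with disconnected or near-disconnected structure, for instance $\overline{K_7}=7K_1$, $\overline{K_7^-}=K_2\cup 5K_1$, $\overline{W_5}=K_1\cup C_5$, $\overline{2K_1+P_4}=K_2\cup P_4$, and $\overline{K_1+(K_1\cup K_5)}=K_1\cup K_{1,5}$.

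For the \emph{clique-type} family $\{K_7,K_7^-,K_7^<\}$, the argument is direct. $K_7$-freeness contradicts $\omega(G)\ge 7$ outright. For $K_7^-$ and $K_7^<$, I would fix a maximum clique $K$ of size $k=\omega(G)\ge 7$ and observe that $H$-freeness sharply limits $|N(u)\cap K|$ for each $u\notin K$; for instance, any $u\notin K$ has $|N(u)\cap K|\le k-3$ in the $K_7^-$-free case, since otherwise $K\cup\{u\}$ would induce a $K_7^-$. Combined with the $\alpha(G)\le 2$ constraint that any two non-adjacent vertices outside $K$ must jointly dominate $K$, and with Ramsey's bound $n\le R(3,k+1)-1$, a short counting argument should give $2k\ge\ceil{n/2}+1$, and Theorem~\ref{t:omega} then finishes the case.

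For the \emph{wheel-type} family $\{W_4^-,W_4,W_5^<,W_5^-,W_5\}$ and the \emph{join-type} family $\{2K_1+P_4,\ K_2+2K_2,\ K_2+(K_1\cup K_3),\ K_1+(K_1\cup K_5)\}$, each $\overline{H}$ has the form ``small piece $\cup$ small piece''. Excluding such an induced subgraph from the triangle-free graph $\overline{G}$ forces strong coexistence restrictions on its components; for example, $(K_1\cup C_5)$-freeness combined with triangle-freeness prevents an induced $C_5$ in $\overline{G}$ from having any non-neighbor, pushing $\overline{G}$ toward a bipartite-like shape once $n$ is large. For each such $H$, I would fix a maximum clique $K$ in $G$, analyze the bipartite structure between $K$ and $V(G)\setminus K$ using the $H$-freeness of $G$, and then either enlarge $K$ enough to meet the Theorem~\ref{t:omega} threshold, construct a dominating $K_{\ceil{n/2}}$ minor directly by absorbing a few vertices of $V(G)\setminus K$ into branch sets anchored on $K$, or delete a well-chosen vertex and apply induction.

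The main obstacle will be the cases $H\in\{W_5,W_5^-,W_5^<,2K_1+P_4\}$, where $\overline{H}$ has two non-trivial components of different types (a $C_5$ with or without pendant, or $P_4$ paired with $K_2$), so the structural constraint on $\overline{G}$ is the weakest and the most varied local configurations can appear. For these the inductive reduction requires a careful choice of deleted vertex --- likely an endpoint of an induced $P_4$ or $C_5$ in $\overline{G}$, or a vertex of extremal degree in $G$ that lies outside every maximum clique --- and the delicate bookkeeping needed to show that the resulting clique or dominating minor has size at least $\ceil{n/2}$ is the most technical step of the proof.
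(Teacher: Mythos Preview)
Your overarching framework---take a minimal counterexample, reduce via Theorem~\ref{t:omega} to the regime $2\omega(G)\le\lceil n/2\rceil$, then treat each $H$ separately---matches the paper. But the specific mechanics you propose diverge from what actually works, and in places do not close. The paper's central device is \emph{not} a maximum clique $K$ but the partition induced by two non-adjacent vertices $x,y$: with $A=N(x)\setminus N(y)$, $B=N(x)\cap N(y)$, $C=N(y)\setminus N(x)$, one shows $|B|\ge\max\{\lfloor n/2\rfloor,14\}$, and this large common neighborhood drives most cases. For instance $K_7^-$ falls in one line: since $|B|\ge 14=R(3,5)$ and $\alpha(G[B])\le2$, there is a $K_5\subseteq B$, and together with $x,y$ it forms an induced $K_7^-$. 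Your route via ``$|N(u)\cap K|\le k-3$ plus Ramsey gives $2k\ge\lceil n/2\rceil+1$'' does not obviously work: the bound $R(3,k+1)$ grows roughly like $k^2/\log k$, far too large to force $n\le 4k-2$, and no counting argument is actually supplied. Similarly, several cases ($W_4^-$, $W_5^-$) are handled in the paper by directly exhibiting an induced copy of $H$ inside the $A,B,C$ structure, a tactic your clique-centric plan does not anticipate.

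Your difficulty assessment is also inverted. The case $H=2K_1+P_4$ is one of the \emph{easiest}: $G[B]$ must be $P_4$-free, hence perfect, so $\omega(G[B])\ge\lceil|B|/2\rceil$ immediately contradicts $2\omega(G)\le\lceil n/2\rceil$. Conversely the genuinely hard case is $H=W_5^<$, where no clique-size contradiction is available and one must \emph{build} the dominating $K_{\lceil n/2\rceil}$ minor by hand, via a detailed analysis of how vertices attach to an induced $C_5$ and an explicit partition into branch sets; your proposal gives no hint of this construction. Finally, the case $H=K_2+2K_2$ relies on Theorem~\ref{t:2K2} to first produce an induced $2K_2$, a dependency you do not mention.
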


Our proof of Theorem~\ref{t:main} in the case $H=K_2+2K_2$ relies on    Theorem~\ref{t:2K2}. Theorem~\ref{t:main} offers  a much simpler proof of the result obtained by Bosse~\cite{Bosse} on $W_5$-free graphs with independence number two and provides further support for both    Dominating Hadwiger's Conjecture and  Hadwiger's Conjecture.   Plummer, Stiebitz and Toft~\cite{PST03} and Kriesell~\cite{Kriesell} proved that  Hadwiger's Conjecture holds for all $H$-free graphs $G$ with $\alpha(G)\le 2$, where $H$ is any graph on at most five vertices with $\alpha(H)\le2$. However, their arguments in certain cases rely on the existence of a dominating connected matching; this technique  does not extend to constructing dominating clique minors. Theorem~\ref{t:main} immediately implies the following: \medskip

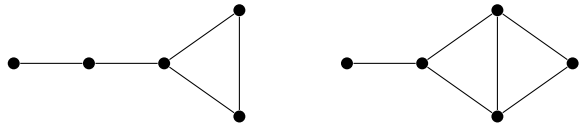
\begin{figure}[ht]
    \centering
    \begin{minipage}{0.2\textwidth}
        \centering
        \begin{tikzpicture}[scale=1, every node/.style={circle, fill=black, inner sep=1.6pt}]
			\node (v1) at (-1,0) {};
			\node (v2) at (0,.707) {};
			\node (v3) at (0,-.707) {};
			
			\draw (v1) -- (v2) -- (v3) -- (v1);
			\node (a) at (-2,0) {};
			\node (b) at (-3,0) {};
			\draw (b) -- (a) -- (v1);    \end{tikzpicture}

    \end{minipage}
    \hskip 1cm
    \begin{minipage}{0.2\textwidth}
        \centering
       \begin{tikzpicture}[scale=1, every node/.style={circle, fill=black, inner sep=1.6pt}]
			\node (v1) at (-2,0) {};
			\node (v2) at (-1,0) {};
			\node (v3) at (0,.707) {};
			\node (v4) at (0,-.707) {};
			\node (v5) at (1,0) {};
			
			\draw (v2) -- (v3) -- (v5) -- (v4) -- (v2);
			\draw (v3) -- (v4);
			\draw (v1) -- (v2);
		\end{tikzpicture}  
		  
    \end{minipage}
     \label{f:kh}
     \caption{ Hammer and Kite}
\end{figure}
 \begin{cor}\label{c:main}
 Let $G$ be a graph on $n$ vertices with $\alpha(G)\le 2$. Let $H\ne K_2\cup K_3$ be a graph with $|H|\le5$ and $\alpha(H)\le2$. If $G$ is $H$-free, then $h_d(G) \geq \chi(G)$.
 \end{cor}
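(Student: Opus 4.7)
The plan is to deduce Corollary~\ref{c:main} from Theorem~\ref{t:main} together with Theorem~\ref{t:equiv}. The key observation is that $G$ being $H$-free implies $G$ is $H'$-free whenever $H$ is an induced subgraph of $H'$, since any induced copy of $H'$ in $G$ restricts to an induced copy of $H$. Hence, if one can pair each graph $H$ admitted by the corollary with some $H'$ from the list in Theorem~\ref{t:main} so that $H$ is an induced subgraph of $H'$, then Theorem~\ref{t:main} forces $h_d(G)\ge \lceil n/2\rceil$, and Theorem~\ref{t:equiv} upgrades this to $h_d(G)\ge \chi(G)$.

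The enumeration step is short because $\alpha(H)\le 2$ is equivalent to $\overline{H}$ being triangle-free, and since $R(3,3)=6$ there are only finitely many candidates. Up to isomorphism, the $5$-vertex graphs $H$ with $\alpha(H)\le 2$ are $K_5$, $K_5^-$, $K_5^<$, $W_4$, $W_4^-$, the gem $K_1+P_4$, the house $\overline{P_5}$, the bowtie $K_1+2K_2$, the kite, $K_1+(K_3\cup K_1)$, $K_4\cup K_1$, the hammer, $C_5$, and the excluded $K_2\cup K_3$; the graphs on at most four vertices with $\alpha\le 2$ are standard and each embeds into one of these. I would then record the explicit pairings ($W_4$ and $W_4^-$ being trivially on the list): $K_3, P_3, K_4^-, C_4 \le_{\mathrm{ind}} W_4$; the paw and $K_1\cup K_2 \le_{\mathrm{ind}} W_4^-$; $K_3\cup K_1$ and the house $\le_{\mathrm{ind}} W_5^-$; $2K_2$ and the bowtie $\le_{\mathrm{ind}} K_2+2K_2$; the kite and the hammer $\le_{\mathrm{ind}} W_5^<$; the gem and $C_5 \le_{\mathrm{ind}} W_5$; $P_4 \le_{\mathrm{ind}} 2K_1+P_4$; $K_4\cup K_1$ and $K_1+(K_3\cup K_1)\le_{\mathrm{ind}} K_1+(K_1\cup K_5)$; and $K_4, K_5, K_5^-, K_5^<$ embed into $K_7$, $K_7^-$, $K_7^<$ as appropriate. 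Each embedding is verified in one line by exhibiting the relevant vertex subset of $H'$ and reading off the induced edges.

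The only real obstacle is bookkeeping: one must check about a dozen embeddings, and then separately confirm that $K_2\cup K_3$ is genuinely forced to be excluded --- that is, verify that $K_2\cup K_3$ does \emph{not} occur as an induced subgraph of any member of the list in Theorem~\ref{t:main}. That check is carried out by running through the $5$-vertex induced subgraphs of each graph on the list and observing that none is isomorphic to $K_2\cup K_3$, which shows that the hypothesis $H\ne K_2\cup K_3$ in the corollary is intrinsic to this approach, not a proof artifact. Combining the verified embeddings with Theorem~\ref{t:main} and Theorem~\ref{t:equiv} then yields the corollary.
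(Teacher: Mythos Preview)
Your approach is correct and matches the paper's: enumerate the graphs $H$ with $|H|\le 5$ and $\alpha(H)\le 2$, observe that each (other than $K_2\cup K_3$) is an induced subgraph of some member of the list in Theorem~\ref{t:main}, and then apply Theorem~\ref{t:equiv} to pass from $h_d(G)\ge\lceil n/2\rceil$ to $h_d(G)\ge\chi(G)$. Your write-up is in fact more explicit than the paper's terse version, which only singles out the Hammer and Kite as induced subgraphs of $W_5^<$ and leaves the remaining embeddings and the invocation of Theorem~\ref{t:equiv} implicit; your added verification that $K_2\cup K_3$ embeds in none of the listed graphs is not needed for the corollary itself but is a worthwhile remark.
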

 
 \begin{proof} One can easily verify  that there are seven graphs on four vertices with independence number two: $2K_2$, $K_1\cup K_3$, $P_4$, $K_1+(K_1\cup K_2)$, $C_4$, $K_4^-$ and $K_4$. Similarly, there are  $14$ graphs on five vertices with independence number two: $K_1\cup K_4$, $K_2\cup K_3$, $K_1+(K_1\cup K_3)$,  the Hammer, the Kite, $C_5$, $K_1+2K_2$, $K_1+P_4$, $W_4^<$, $W_4^-$, $W_4$, $K_5^<$, $K_5^-$ and $K_5$, where  the  Hammer and Kite graphs are given in Figure~\ref{f:kh},  and note that $W_5^<$ contains an induced copy of both. 
 \end{proof}
 It remains unknown whether   Conjecture~\ref{c:dDM} holds for $(K_2\cup K_3)$-free graphs. This paper is organized as follows. We first prove in Section~\ref{s:highmindeg} several lemmas on dominating clique minors in graphs with high minimum degree. 
   We then prove Theorem~\ref{t:omega} in Section~\ref{s:omega}. Finally, we use Theorem~\ref{t:omega} and the results in Section~\ref{s:highmindeg} to prove Theorem~\ref{t:main} in Section~\ref{s:main}.  \medskip
   
   We need to introduce more notation. A \dfn{clique} in a graph  $G$ is a set   of pairwise adjacent vertices. For a vertex $x \in V(G)$, we use $N(x)$ to denote the set of vertices in $G$ which are all adjacent to $x$. We define $N[x]$ to be  $N(x) \cup \{x\}$. The degree of $x$ is denoted by $d_G(x)$ or simply $d(x)$. If $A,B \subseteq V(G)$ are disjoint, we say that $A$ is \dfn{complete} to $B$ if  every vertex in $A$ is adjacent to all vertices in $B$, and $A$ is \dfn{anticomplete} to $B$ if no vertex in $A$ is adjacent to any vertex in $B$. If $A = \{a\}$, we simply say $a$ is complete to $B$ or $a$ is anticomplete to $B$. We use $e_G(A,B)$ to denote the number of edges between $A$ and $B$ in $G$. The subgraph of $G$ induced by $A$ is  denoted as $G[A]$. We denote by $B \less A$ the set $B - A$  and $G \less A$ the subgraph of $G$ induced on $V(G) \less A$.     If $A = \{ a\}$, we simply write $A \setminus a$ and $ G \setminus a$, respectively. An edge $e=xy$ in $G$ is \dfn{dominating} if every vertex in $V(G)\less\{x,y\}$ is adjacent to $x$ or $y$. Throughout, we use the notation ``$S :=$'' to indicate that $S$ is defined to be the expression on the right-hand side. For any positive integer $k$, we write $[k]:=\{1, \dots , k\}$.

\section{Graphs   with high minimum degree}\label{s:highmindeg}

 We begin this section  with a lemma concerning graphs without constraints on  their independence number. 
 
\begin{lem}\label{l:mindeg}
    Let $G$ be a graph on $n$ vertices. If $\delta(G) \geq n-3$, then $h_d(G) \geq \chi(G)$.
\end{lem}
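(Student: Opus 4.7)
The plan is to exploit the simple structure of $\overline{G}$. Since $\delta(G) \geq n-3$ is equivalent to $\Delta(\overline{G}) \leq 2$, each component of $\overline{G}$ is an isolated vertex, a path $P_\ell$ with $\ell \geq 2$, or a cycle $C_\ell$ with $\ell \geq 3$. Since distinct components of $\overline{G}$ have no edges between them in $\overline{G}$, their vertex sets are pairwise complete in $G$; equivalently, $G = G_1 + G_2 + \cdots + G_r$, where each $G_i$ is the complement of the $i$-th component of $\overline{G}$. Both $\chi$ and $h_d$ are additive across joins --- for $h_d$, any sequence of dominating minors, one per $G_i$, can be concatenated in an arbitrary order, since the completeness between the $V(G_i)$'s supplies all cross-dominations automatically. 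It therefore suffices to show $h_d(G_i) \geq \chi(G_i)$ for each $i$, i.e., to handle the case where $\overline{G}$ is connected.

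If $\overline{G}$ is a single vertex or a triangle, then $\chi(G) = 1$ and any single vertex serves as a dominating $K_1$ minor. For the remaining ``easy'' cases --- $\overline{G} = P_\ell$ with $\ell \geq 2$, or $\overline{G} = C_\ell$ with $\ell \geq 4$ and $\ell$ even --- label $V(\overline{G}) = \{v_1, \ldots, v_\ell\}$ in path or cyclic order. The odd-indexed vertices $v_1, v_3, \ldots$ are pairwise at graph-distance at least $2$ in $\overline{G}$ (in the even-cycle case, the evenness of $\ell$ also covers the pair $v_{\ell-1}, v_1$), so they form a clique of size $\ceil{\ell/2} = \chi(G)$ in $G$, giving the required dominating minor via singleton branch sets.

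The one substantive obstacle is the odd-cycle case $\overline{G} = C_{2k+1}$ with $k \geq 2$: here $v_1$ and $v_{2k+1}$ are consecutive on the cycle, so the odd-indexed vertices fall one edge short of being a clique in $G$, and in fact $\omega(G) = k < k+1 = \chi(G)$, so no clique argument can work. The remedy is to set
\[
T_1 := \{v_1, v_3, v_5, \ldots, v_{2k+1}\} \quad \text{and} \quad T_{i+1} := \{v_{2i}\} \text{ for } i \in [k].
\]
Consecutive odd-indexed vertices have cycle-distance $2$, so $T_1$ is connected in $G$ via the path $v_1 - v_3 - \cdots - v_{2k+1}$. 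Any two even-indexed vertices $v_{2a}, v_{2b}$ with $a, b \in [k]$ have cycle-distance between $2$ and $2k - 2$, hence are adjacent in $G$, so the singletons $T_2, \ldots, T_{k+1}$ are pairwise adjacent. Each $v_{2i}$ is non-adjacent in $G$ only to its two cycle-neighbors $v_{2i-1}$ and $v_{2i+1}$, leaving at least $k-1 \geq 1$ odd-indexed neighbors inside $T_1$. Thus $(T_1, T_2, \ldots, T_{k+1})$ is a dominating $K_{k+1}$ minor, matching $\chi(G) = k+1$, and the proof is complete.
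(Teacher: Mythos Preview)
Your proof is correct and is organized a bit differently from the paper's. The paper argues by minimal counterexample: it first reduces to $\alpha(G)=2$ by showing that a stable set of size $3$ must be complete to the rest of $G$ and applying minimality, then shows $\overline{G}$ must be $2$-regular, splits into the bipartite and odd-cycle cases, and for an odd cycle $C$ of length $2k+1$ uses the branch sets $\{v_1,v_3,v_{2k+1}\},\{v_2\},\{v_4\},\ldots,\{v_{2k}\}$ together with a dominating minor of $G\setminus V(C)$ obtained by minimality. You instead bypass induction entirely: you decompose $G$ as the join of the complements of the components of $\overline{G}$, use that $\chi$ is additive and $h_d$ is super-additive under joins, and then give a direct construction for each component type. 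Your odd-cycle construction is essentially the same idea as the paper's---one branch set of odd-indexed vertices dominating the even-indexed singletons---just using all $k+1$ odd-indexed vertices rather than only three of them. The join decomposition is arguably cleaner and avoids the minimal-counterexample machinery; the paper's argument stays closer to the inductive style used throughout Section~\ref{s:main}. One small quibble: concatenating the dominating minors ``in an arbitrary order'' is not literally correct, since the relative order within each $G_i$ must be preserved; but the natural concatenation (all of $G_1$'s sequence, then all of $G_2$'s, etc.) certainly works, so this does not affect the argument.
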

\begin{proof}
    Suppose for a contradiction that $h_d(G)< \chi(G)$. We choose $G$  with $n$ minimum.  
    Since $\delta(G) \geq n-3$, we see that  $\alpha(G)\le3$ and $\Delta(\overline{G}) \leq 2$. Thus $\overline{G}$ is the disjoint union of cycles and paths. Let $S$ be a maximum stable set in $G$. Suppose $|S|=3$. Then $S$ is complete to $V(G)\less S$ because $\delta(G)\ge n-3$. Note that $\chi(G\less S)= \chi(G)-1$. By the minimality of $n$, $ G\less S $ has a dominating clique minor of order $\chi(G)-1$, say $(T_2, \ldots, T_{\chi(G)})$. Let   $s\in S$. Then $(G[\{s\}], T_2,   \ldots,  T_{\chi(G)})$ yields a dominating clique minor of order $\chi(G)$ in $G$, a contradiction. Thus $\alpha(G)=|S|=2$. We claim that $\overline{G}$ is $2$-regular. Suppose there exists a vertex  $x\in V(G)$ such that $d(x)\ge n-2$. Then $\chi(G[N(x)])=\chi(G)-1$. By the minimality of $n$, $h_d(G[N(x)])\ge\chi(G[N(x)])$ and thus $h_d(G)\ge h_d(G[N(x)])+1\ge\chi(G[N(x)])+1= \chi(G)$, a contradiction. This proves that $\overline{G}$ is $2$-regular, as claimed. Suppose next $\overline{G}$ has no odd cycles. Then $\overline{G}$ is bipartite, $n$ is even,   $\chi(G)= n/2$, and $h_d(G)\ge \omega(G)\ge n/2$, a contradiction. Thus $\overline{G}$ contains at least one odd cycle, say $C$ with vertices $v_1, \ldots,   v_{2k+1}$ in order, where $k\ge 1$ is an integer. Since $\alpha(G)=2$, we see that  $k\ge2$. Then $\chi(G)=(k+1)+\chi(G\less V(C))$. By the minimality of $n$,    $G\less V(C)$ has a dominating clique minor of order $\chi(G)-(k+1)$, say $(T_{\chi(G)-(k+2)}, \ldots, T_{\chi(G)})$.  Then $(G[\{v_1, v_3, v_{2k+1}\}], G[\{v_2\}],  G[\{v_4\}], \ldots, G[\{v_{2k}\}],   T_{\chi(G)-(k+2)}, \ldots, T_{\chi(G)})$ yields a dominating clique minor of order $\chi(G)$ in $G$, a contradiction. 
\end{proof}

\begin{lem}\label{l:highmindeg} Let $G$ be a graph  on $n$ vertices with $\alpha(G)\le2$. If $\delta(G) \ge n-6$, then $h_d(G)\ge \ceil{n/2}$.  
\end{lem}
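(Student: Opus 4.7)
The plan is to proceed by strong induction on $n$. The base case $n \le 26$ is immediate: by Corollary~\ref{c:smalln}, every graph on at most $26$ vertices with $\alpha \le 2$ already satisfies $h_d(G) \ge \ceil{n/2}$, and no minimum-degree assumption is required.

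For the inductive step assume $n \ge 27$. My strategy is to locate a dominating edge $xy \in E(G)$, peel off its two endpoints, and apply the inductive hypothesis to $G' := G - \{x, y\}$. The graph $G'$ satisfies $\alpha(G') \le 2$ and $\delta(G') \ge \delta(G) - 2 \ge (n-2) - 6 = |G'| - 6$, so by induction $h_d(G') \ge \ceil{(n-2)/2} = \ceil{n/2} - 1$. Let $(T_2, \ldots, T_{\ceil{n/2}})$ be a dominating $K_{\ceil{n/2} - 1}$ minor of $G'$. Setting $T_1 := G[\{x, y\}]$, which is connected since $xy$ is an edge of $G$, the tuple $(T_1, T_2, \ldots, T_{\ceil{n/2}})$ is a dominating $K_{\ceil{n/2}}$ minor of $G$: every vertex of $T_j \subseteq V(G')$ with $j \ge 2$ is adjacent in $G$ to $x$ or $y$ by the defining property of a dominating edge, and dominance among the later parts is inherited from $G'$.

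The main obstacle is exhibiting such a dominating edge when $n \ge 27$. Passing to the complement, $\overline{G}$ is triangle-free (from $\alpha(G) \le 2$) with $\Delta(\overline{G}) \le 5$ (from $\delta(G) \ge n-6$), and an edge $xy$ of $G$ is dominating if and only if $x, y$ are non-adjacent in $\overline{G}$ with no common neighbor there; equivalently, $x$ and $y$ lie in distinct components of $\overline{G}$ or satisfy $d_{\overline{G}}(x, y) \ge 3$. To produce this pair I would invoke the classical Moore-type bound: any graph in which every pair of vertices has distance at most $2$ satisfies $|V| \le \Delta^2 + 1$, since fixing any vertex $v$ gives $|V| \le 1 + \Delta + \Delta(\Delta - 1) = \Delta^2 + 1$. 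With $\Delta \le 5$ this cap reads $|V(\overline{G})| \le 26$, contradicting $n \ge 27$. Hence $\overline{G}$ is either disconnected or has diameter at least $3$, furnishing the desired pair $x, y$ and closing the induction.
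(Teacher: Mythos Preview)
Your proof is correct and follows exactly the paper's strategy: handle $n\le 26$ via Corollary~\ref{c:smalln}, find a dominating edge for $n\ge 27$, and induct on $G\setminus\{x,y\}$. The paper locates the dominating edge by fixing a minimum-degree vertex $x$ and using pigeonhole on $e_{\overline G}(N(x),\,V(G)\setminus N[x])$ to produce $y\in N(x)$ complete to $V(G)\setminus N[x]$; this is precisely your Moore-bound argument in $\overline G$ unwound at the specific vertex~$x$.
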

\begin{proof} Let $G$ be given as in the statement. By Corollary~\ref{c:smalln}, we may assume that $n\ge 27$.   Let $x\in V(G)$ with $d(x)=\delta(G)$ and $A:=V(G)\less N[x]$. Then $d(x)\ge n-6$ and $A$ is a  clique of order $n-1-d(x)\le 5$. We next show that   some vertex  in $N(x)$ is complete to $A$ in $G$. Suppose no vertex in $N(x)$ is complete to $A$ in $G$. Then $e_{\overline{G}}(N(x), A)\ge |N(x)|\ge n-6\ge21$ since $n\ge27$, and so some vertex, say $z$, in $A$ is not adjacent to at least $\ceil{e_{\overline{G}}(N(x), A)/|A|}\ge\ceil{21/5}\ge5$ vertices in $N(x)$.  
Note that $zx\notin E(G)$.  Thus  $d(z)\le n-7$,  contrary to the assumption that $\delta(G)\ge n-6$. This proves that some vertex, say $y$,  in $N(x)$ is complete to $A$ in $G$. Then the edge $xy$ is dominating in $G$. Note that  $\delta(G\less\{x,y\})\ge \delta(G)-2\ge (n-2)-6$. By the induction hypothesis, $G\less\{x,y\}$ has a dominating clique minor of order $\ceil{(n-2)/2}$, say $(T_2, \ldots, T_{\ceil{n/2}})$. Then $(G[\{x,y\}], T_2, \ldots, T_{\ceil{n/2}})$ yields a dominating clique minor of order $\ceil{n/2}$ in $G$,  as desired. 
\end{proof}

Lemma~\ref{l:highmindeg} immediately implies the following:

\begin{cor}\label{c:con} Let $G$ be a graph  on $n$ vertices with $\alpha(G)\le2$. If $G$ is $(n-6)$-connected, then $h_d(G)\ge \ceil{n/2}$.  
\end{cor}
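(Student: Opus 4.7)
The plan is essentially a one-step reduction to Lemma~\ref{l:highmindeg}. The standard fact from graph connectivity theory is that for any graph $G$ we have $\kappa(G) \le \delta(G)$: indeed, if a vertex $v$ had degree strictly less than $\kappa(G)$, then its neighborhood $N(v)$ would be a vertex cut of size $< \kappa(G)$ (since $v$ would become isolated in $G\setminus N(v)$), which would contradict $(n-6)$-connectivity (with the edge case $G = K_n$ being trivial, since there $\delta = n-1$ as well).

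So my plan is: first invoke $\kappa(G) \ge n-6$ to conclude $\delta(G) \ge n-6$. Then, since the hypothesis $\alpha(G) \le 2$ is preserved verbatim, I immediately apply Lemma~\ref{l:highmindeg} to $G$ to obtain $h_d(G) \ge \lceil n/2\rceil$. There is no real obstacle here; the only thing to double-check is that the definition of $k$-connectivity being used in the paper is the standard one (so that it implies $\delta \ge k$), which is clearly the intended reading given that the corollary is billed as an ``immediate'' consequence of the minimum-degree lemma.
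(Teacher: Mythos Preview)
Your proposal is correct and matches the paper's approach exactly: the corollary is stated as an immediate consequence of Lemma~\ref{l:highmindeg} with no separate proof given, and your observation that $(n-6)$-connectivity forces $\delta(G)\ge n-6$ is precisely the intended one-line reduction.
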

 
 We end this section with two lemmas that are used in the proof of Theorem~\ref{t:main}. 
 
 \begin{lem}\label{l:C5} Let $G$ be a graph on $n$ vertices with $\alpha(G)\le2$.
   If $\omega(G)<\ceil{n/2}$ and $G$ has no dominating edge, then every vertex in $G$ with degree less than $n-1$  lies on  an induced $C_5$ in $G$. 
   \end{lem}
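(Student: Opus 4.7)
The plan is to fix a vertex $v$ with $d(v) < n-1$ and construct an induced $C_5$ of the form $v$--$a$--$b$--$c$--$d$--$v$ with $a,d \in N(v)$ and $b,c \in M := V(G) \setminus N[v]$. Since $\alpha(G) \le 2$, any two non-neighbors of $v$ must be adjacent (else they would form an independent triple with $v$), so $M$ is a nonempty clique in $G$.

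The first key step is to observe that $N(v)$ is not a clique. Indeed, if it were, then $N[v]$ and $M$ would both be cliques whose sizes sum to $n$, so one of them would have order at least $\lceil n/2 \rceil$, contradicting $\omega(G) < \lceil n/2 \rceil$. Hence I can choose $a,d \in N(v)$ with $ad \notin E(G)$. Because $\{a,d\}$ is a non-adjacent pair and $\alpha(G) \le 2$, every other vertex is adjacent to $a$ or to $d$; in particular $M \subseteq N(a) \cup N(d)$.

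Next I invoke the hypothesis that $G$ has no dominating edge. The edge $va$ is not dominating, so there exists $z \notin \{v,a\}$ non-adjacent to both $v$ and $a$; necessarily $z \in M \setminus N(a)$. Symmetrically $M \setminus N(d) \ne \emptyset$. Combined with $M \subseteq N(a) \cup N(d)$, this lets me select $b \in M \cap N(a) \setminus N(d)$ and $c \in M \cap N(d) \setminus N(a)$, which are automatically distinct. A direct check shows that $\{v,a,b,c,d\}$ induces a $C_5$: the edges $va$ and $dv$ come from $a,d \in N(v)$, the edges $ab$ and $cd$ from the choice of $b$ and $c$, and the edge $bc$ from $M$ being a clique; the non-edges $vb, vc$ come from $b,c \in M$, the non-edge $ad$ from the choice of $a,d$, and the non-edges $ac, bd$ from the defining conditions on $b$ and $c$.

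I do not anticipate any serious obstacle: the argument is entirely mechanical once the two main moves are spotted. The subtle point is ensuring that neither $a$ nor $d$ is complete to $M$, so that $b$ and $c$ can be chosen with opposite adjacency patterns to $a$ and $d$, and this is exactly what the no-dominating-edge hypothesis delivers, while the clique-number hypothesis is used solely to extract the non-adjacent pair $a,d$ inside $N(v)$.
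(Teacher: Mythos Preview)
Your proof is correct and follows essentially the same approach as the paper's own argument: pick a non-adjacent pair $a,d$ in $N(v)$ (using $\omega(G)<\lceil n/2\rceil$), use the non-dominating-edge hypothesis on $va$ and $vd$ to obtain vertices in $M\setminus N(a)$ and $M\setminus N(d)$, and then read off the induced $C_5$. The only cosmetic difference is in the labeling of vertices.
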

 \begin{proof} Let $x\in V(G)$ with $d(x)\le n-2$.  Since $\alpha(G)\le2$ and $\omega(G)<\ceil{n/2}$, we see that  $\alpha(G[N(x)])=2$ and $V(G)\less N[x]$ is a clique. Let $y,z\in N(x)$ with $yz\notin E(G)$.  Since  $G$ has no dominating edge,  there exist $ y',z'\in V(G)\less N[x]$ such that $zz', yy'\notin E(G)$. Then $yz', zy'\in E(G)$ and so $y'\ne z'$. It follows that $G[\{x, y,z, y',z'\}]=C_5$, as desired.
 \end{proof}  
 
 \begin{lem}\label{l:C4} Let $G$ be a graph on $n$ vertices with $\alpha(G)\le2$.
   If $2\omega(G)\le\ceil{n/2}$, then every vertex in $G$ with degree less than $n-1$  lies on  an induced $C_4$ in $G$. 
   \end{lem}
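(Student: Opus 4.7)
The plan is to adapt the strategy of Lemma~\ref{l:C5}, replacing the induced $C_5$ search with a search for an induced $C_4$, and exploiting $2\omega(G)\le\ceil{n/2}$ in place of the hypothesis $\omega(G)<\ceil{n/2}$. Fix a vertex $x \in V(G)$ with $d(x) \le n-2$ and set $A := V(G) \less N[x]$, so $|A| = n - 1 - d(x) \ge 1$. Since $\alpha(G) \le 2$ and $x$ is anticomplete to $A$, the set $A$ is a clique. Suppose for a contradiction that $x$ lies on no induced $C_4$ in $G$, and fix any $a \in A$.

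I would next identify two cliques inside $N[x]\cup\{a\}$. Because $xa \notin E(G)$, the non-$C_4$ assumption forces $N(a) \cap N(x)$ to be a clique: if $y, z \in N(a) \cap N(x)$ with $yz \notin E(G)$, then $x, y, a, z$ would form an induced $C_4$ through $x$. Hence $\{a\} \cup (N(a) \cap N(x))$ is a clique, which gives $|N(a) \cap N(x)| \le \omega(G) - 1$. On the other hand, using $\alpha(G)\le 2$, no two vertices of $N(x)\less N(a)$ can be non-adjacent, for otherwise together with $a$ they would form an independent triple. Hence $\{x\} \cup (N(x) \less N(a))$ is also a clique, yielding $d(x) - |N(a) \cap N(x)| \le \omega(G) - 1$.

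Adding these two bounds gives $d(x) \le 2\omega(G) - 2 \le \ceil{n/2} - 2$, so that $|A| = n - 1 - d(x) \ge \lfloor n/2 \rfloor + 1$. Since $A$ is itself a clique, this forces $\omega(G) \ge \lfloor n/2 \rfloor + 1$, and hence $2\omega(G) \ge n + 1 > \ceil{n/2}$, contradicting the hypothesis. This closes the argument.

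There should be no real obstacle beyond bookkeeping; the one delicate point is the double use of $\alpha(G)\le 2$ — first to make $A$ a clique, then to make $\{x\} \cup (N(x) \less N(a))$ a clique — which is precisely what allows the two size estimates to combine into $d(x) \le 2\omega(G) - 2$ and pits the clique $A$ against $\omega(G)$ for the final contradiction.
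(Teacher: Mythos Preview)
Your proof is correct and uses essentially the same ingredients as the paper's: pick a non-neighbour $a$ of $x$, observe that $V(G)\setminus N[x]$, $\{x\}\cup(N(x)\setminus N(a))=V(G)\setminus N[a]$, and $\{a\}\cup(N(a)\cap N(x))$ are all cliques, and count. The only difference is organizational: the paper argues directly that $|N(x)\cap N(a)|\ge n-2\omega(G)\ge\lfloor n/2\rfloor$ and concludes this set cannot be a clique (else $\omega(G)\ge\ceil{n/2}$), whereas you argue by contradiction, assume $N(a)\cap N(x)$ is a clique, bound $d(x)\le 2\omega(G)-2$, and then derive that $A$ is too large a clique --- but the same three cliques and the same arithmetic are doing the work in both.
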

 \begin{proof} Let $x\in V(G)$ with $d(x)\le n-2$.  Let $y\in V(G)\less N[x]$.   Since $\alpha(G)\le2$ and $2\omega(G)\le\ceil{n/2}$, we see that    both $V(G)\less N[x]$ and $V(G)\less N[y]$ are cliques of order at most $\omega(G)$. Then $|N(x)\cap N(y)|\ge n-(|V(G)\less N[x]|+|V(G)\less N[y]|)\ge n-2\omega(G)\ge\lfloor n/2\rfloor$. If $N(x)\cap N(y)$ is a clique, then $\omega(G)\ge |(N(x)\cap N(y))\cup\{x\}|\ge \ceil{n/2}$, contrary to the assumption that $2\omega(G)\le\ceil{n/2}$.   Thus there exist $z,w\in N(x)\cap N(y)$ such that $zw\notin E(G)$, and so $G[\{x, y,z, w\}]=C_4$, as desired.
 \end{proof}  
 
  \section{Proof of Theorem~\ref{t:omega}}\label{s:omega}

A \dfn{seagull} in a graph is an induced path on three vertices.  For a nonempty clique $K$ in a graph $G$, let $K^*$  be the set of vertices $v \in V (G) \less K$ such that   $v$ is neither complete nor anti-complete to $K$.   We define the \dfn{capacity of $K$}  to be 
$(|G|+|K^*|-|K|)/2$.  Theorem~\ref{t:seagulls} is a deep result of Chudnovsky and Seymour~\cite{seagull} on the existence of $\ell$ pairwise disjoint seagulls in graphs with independence number two.

\begin{thm}[Chudnovsky and Seymour~\cite{seagull}]\label{t:seagulls}  Let $G$ be a graph with $\alpha(G)\le2$ and let $\ell\ge0$  be an integer such that
 $G\ne W_5$  when $\ell=2$. Then $G$ has $\ell$ pairwise disjoint seagulls  if and only if
\begin{enumerate}[(i)]
\item  $|G|\ge 3\ell$,
\item $G$ is $\ell$-connected,
\item every clique of $G$ has capacity at least $\ell$ and
\item  $G$ admits an anti-matching of cardinality $\ell$, that is,  $\overline{G}$ admits a   matching of cardinality $\ell$.
\end{enumerate}
\end{thm}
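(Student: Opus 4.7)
I would prove Theorem~\ref{t:seagulls} by treating the two implications separately. For necessity, suppose $G$ has $\ell$ pairwise disjoint seagulls $P_1,\ldots,P_\ell$. Condition (i) is immediate since $|V(P_1)\cup\cdots\cup V(P_\ell)|=3\ell$. For (iv), each seagull $P_i=u_iv_iw_i$ contributes the non-edge $u_iw_i$, and these non-edges are vertex-disjoint, yielding an anti-matching of size $\ell$. For (ii), I would argue by contradiction: if $S$ is a cutset of size less than $\ell$ and $A,B$ are two sides of the separation, then since $\alpha(G)\le 2$ each of $G[A],G[B]$ is a clique, so no seagull lies entirely in $A$ or in $B$; every seagull must use at least one vertex of $S$, and a quick charging to $S$ gives a contradiction. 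For (iii), fix a clique $K$; a seagull has exactly one non-edge, so at most one of its two endpoints lies in $K$, which combined with a double-count over the three cell types $K$, $K^*$, and $V(G)\less(K\cup K^*)$ delivers $\ell \le (|G|+|K^*|-|K|)/2$.

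The sufficiency direction is the hard part. My plan is induction on $\ell$, with $|G|$ as a secondary parameter. For $\ell\in\{0,1\}$ there is essentially nothing to prove: a single seagull exists as soon as $\overline{G}$ has an edge, which is condition (iv). For the inductive step with $\ell\ge 2$ (and $G\ne W_5$ when $\ell=2$), the goal is to locate a single seagull $P=uvw$ such that $G':=G\less V(P)$ still satisfies the four conditions for $\ell-1$; the inductive hypothesis then gives $\ell-1$ disjoint seagulls in $G'$, completing the packing. To find $P$, start from a non-edge $\{u,w\}$ guaranteed by (iv); because $\overline{G}$ is triangle-free (from $\alpha(G)\le 2$) and (ii) holds, the common neighborhood $N(u)\cap N(w)$ is non-empty, and in fact large enough to allow a careful choice of the midpoint $v$ (and, if needed, a re-selection of the non-edge).

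The main obstacle is preserving (ii) and (iii) after deletion. Connectivity can drop by up to three when three vertices are removed, so ensuring $G'$ is $(\ell-1)$-connected forces us to avoid seagulls whose vertex set contains a small cut of $G$; the argument must exploit local structure around non-edges to sidestep such cuts. Similarly, removing a seagull can reduce the capacity of a clique $K$, and we must select $P$ so that its interaction with every capacity-tight clique costs at most one unit of capacity. This is precisely where the exceptional graph $W_5$ surfaces: in $W_5$, every choice of seagull knocks the capacity of the outer $5$-cycle below the required threshold, which is why the theorem must exclude it. Dealing with this obstruction is the crux of the Chudnovsky--Seymour argument and calls for a structural case analysis on tight cliques, tight separators, and the interplay between the anti-matching given by (iv) and the common-neighbor structure forced by $\alpha(G)\le 2$. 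My proposed proof would follow that structural pathway, choosing $P$ to simultaneously respect every tight constraint; a complete verification is lengthy, but the induction scheme above isolates where the genuine work must be done.
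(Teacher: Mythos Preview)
The paper does not prove Theorem~\ref{t:seagulls}; it is quoted verbatim from Chudnovsky and Seymour's paper \emph{Packing seagulls} and used as a black box in the proof of Theorem~\ref{t:omega}. There is therefore no ``paper's own proof'' to compare your attempt against.

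That said, a few remarks on your sketch. Your necessity arguments for (i), (ii), and (iv) are correct; in particular, for (ii) the point that each component of $G\setminus S$ must be a clique (since picking one vertex from each of two components already uses up the independence budget) is exactly right, and then every seagull, being non-complete, must meet $S$. For (iii) your ``quick charging'' can be made precise by weighting vertices of $K^*$ by $2$, vertices complete or anticomplete to $K$ by $1$, and vertices of $K$ by $0$, and checking case by case that each seagull carries total weight at least $2$; this works, though you did not write it out.

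For sufficiency, your inductive scheme (delete one carefully chosen seagull and recurse) is indeed the skeleton of the Chudnovsky--Seymour argument, but you should be aware that the ``structural case analysis on tight cliques, tight separators'' you allude to occupies essentially the entire thirty-page paper. Your proposal is an outline of a strategy, not a proof, and the sentence ``a complete verification is lengthy'' substantially understates the work: controlling the drop in connectivity and in clique capacity simultaneously is genuinely delicate, and the $W_5$ exception is only the most visible of several obstructions that have to be handled. If the intent was to supply a proof for the present paper, this is not one; if the intent was to summarize why the cited theorem is plausible, it is adequate, but the paper itself simply cites the result without attempting any such summary.
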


Theorem~\ref{t:seagulls} plays a key role in proving Theorem~\ref{t:omega}, which we restate below for convenience. \Clique*

 \begin{proof} Suppose $h_d(G) < \lceil n/2\rceil$. We choose $G$ with $n$ minimum. Then $G$ is connected, $\alpha(G)=2$  and $\omega(G)\le h_d(G) < \lceil n/2\rceil$. Suppose $n$ is even.  Let  $x\in V(G)$ such that $\omega(G\less x)=\omega(G)$. This is possible because  $\alpha(G)=2$.  Then $2\omega(G\less x)=2\omega(G)\ge \ceil{n/2}+1=\ceil{(n-1)/2}+1$.   By the minimality of $n$,    $h_d(G)\ge h_d(G\less x) \ge \ceil{(n-1)/2}=\ceil{n/2}$, a contradiction. Thus $n$ is odd. 
Recall  that $\omega(G)< \ceil{n/2}$.  Let  $\omega(G):= \ceil{n/2}-\ell$ for some  integer $\ell\ge1$. Then $n=2\omega(G)+2\ell-1$. Since $2\omega(G)\ge \ceil{n/2}+1$, we see that     $n\ge 4\ell+1$. \medskip

Let $A$ be a maximum clique in $G$ and let $G' := G \backslash A$. Then $\omega(G') \leq \omega(G)$. 
Suppose    $G'$ is disconnected. 
 Then $V(G)\less A$ is the disjoint  union of two cliques, say, $B_1$ and $B_2$, such that  $B_1$ is anticomplete to $B_2$ in $G$. Since $\alpha(G)=2$, we see that  each vertex in $A$ is either complete to $B_1$ or complete to $B_2$. Define
\[A':=\{a\in   A\mid a \text{ is  complete to } B_1\} \text{ and } A'':= A\less A'.\]
Then $A'\cup B_1$ and $A''\cup B_2$ are disjoint cliques in $G$, and $V(G)=(A'\cup B_1)\cup (A''\cup B_2)$. Thus  $\omega(G)\ge \max\{| A'\cup B_1|, |A''\cup B_1|\}\ge \ceil{n/2}$, a contradiction. This proves that  $G'$ is connected. 
 If some vertex in $A$ is anticomplete to $V(G')$, then  $V(G')$ is a clique in $G$ and so  $\omega(G)\ge |G'|>\ceil{n/2}$, a contradiction. Thus  every vertex in $A$ is adjacent to at least one vertex in $G'$.  Then $G'$, together with $A$, yields a dominating clique minor of order $\omega(G)+1$ in $G$. It follows that $ \omega(G)+1\le h_d(G)<\ceil{n/2}$. Thus  $\omega(G)\le \ceil{n/2}-2$ and $|G'|\ge \ceil{n/2}+1$.  Suppose $G'= W_5$. Then $n=9$. Since the Ramsey number $R(3,4)=9$, we see that  $\omega(G)\ge4$, contrary to the fact that $\omega(G)\le \ceil{n/2}-2=3$.  This proves that $G'\ne W_5$. We next show that $G'$ contains $\ell  $ pairwise disjoint seagulls. \medskip
 
 Note  that $|G'| =n-\omega(G)=n-(\ceil{n/2}-\ell)=\lfloor n/2\rfloor+\ell\geq 3\ell$ 
 because $n\ge 4\ell+1$. Moreover, $G'$ admits an $\ell$ anti-matching, else let $M'$ be an anti-matching in $G'$ with $|M'|\le \ell-1$. Then 
\begin{align*}
\omega(G')&\ge |G'|-|V(M')|\\
	          &\ge |G'|-2(\ell-1)\\
	          &= (n-\omega(G))-2(\ell-1) \\
	          &=(2\omega(G)+2\ell-1)-\omega(G)-2(\ell-1)\\
	          &= \omega(G) +1,
\end{align*}	        
a contradiction.   Let  $K$ be a clique  in $G'$. Then  $|K| \leq \omega(G)$. We next show that   $K$   has capacity at least $\ell$ in $G'$. Recall that  $K^*$ denotes the set of vertices $v$ in $G'\less K$ such  that  $v$ is neither complete nor anticomplete to $K$. We claim that  $|K|-|K^*|\le \omega(G)-1$. This is trivially true   when $|K| \le  \omega(G)-1$. Suppose   $|K| = \omega(G)$. Then $\omega(G')=|K|$ and  $|G' \backslash K| = n-2\omega(G)=2\ell-1\ge1$. It suffices to show that  $|K^*|\ge1$. Since $G'$ is connected and $|G' \backslash K|\ge1$, there exists a vertex $u\in V(G')\less K$ such that $u$ is adjacent to some vertex in    $K$.  Then $u$ is not complete to $K$ in $G'$ because  $\omega(G')=|K|$. Thus $u$ 
is neither complete nor anticomplete to $K$ in $G'$, and so $|K^*|\ge1$.   This proves that  $|K|-|K^*|\le \omega(G)-1$, as claimed.  It follows that  $K$ has capacity $(|G'|+|K^*|-|K|)/2\ge \big((n-\omega(G))-(\omega(G)-1)\big)/2=(n-2\omega(G)+1)/2=\ell$ in $G'$. Finally, suppose $G'$ is not $\ell$-connected. Let $S$ be a minimum vertex cut in $G'$. Then $|S| \leq \ell-1$ and $G' \backslash S$ has exactly two components, say $G_1$ and $G_2$. Note that $V (G_1)$ and $V (G_2)$ are disjoint cliques; moreover, $V (G_1)$ is anticomplete to $V (G_2)$ in $G$. We   partition $A$ into $A'$ and $A''$ such that $V (G_1) \cup A'$
and $V (G_2) \cup  A''$ are disjoint cliques. But then 
\begin{align*}
\omega(G)&\geq  \max\{|V (G_1) \cup A'|, |V (G_2) \cup  A''|\} \\
&\geq \ceil{|G\less S|/2} \\
&\ge\ceil{\big(n-(\ell-1)\big)/2}\\
&=\ceil{\big((2\omega(G)+2\ell-1)-\ell+1\big)/2}\\
&>\omega(G),
\end{align*}
 a contradiction. This proves that $G'$ is $\ell$-connected. By Theorem~\ref{t:seagulls} applied to $G'$ and $\ell$, we see
that $G'$ has $\ell$ pairwise disjoint seagulls. Such $\ell$ seagulls, together with the clique $A$, yield a dominating clique minor of order $\omega(G)+\ell=\lceil n/2\rceil$ in $G$, a contradiction. \end{proof}

 
 \section{Proof of Theorem~\ref{t:main}}\label{s:main}
 
 In this section we prove Theorem~\ref{t:main} which we restate below for convenience. \GraphH*
 
 \begin{proof}     Suppose the statement is false. We choose $G$ with $n$ minimum. Then $G$ is connected  and $\omega(G)\le h_d(G) < \lceil n/2\rceil$.  We next prove several claims. By  Corollary~\ref{c:smalln}, 
 \medskip

 \setcounter{counter}{0}
\noindent {\bf Claim\refstepcounter{counter}\label{n}  \arabic{counter}.}
   $n\ge27$.\medskip

\noindent {\bf Claim\refstepcounter{counter}\label{nodd}  \arabic{counter}.}   
$n$ is odd. 
 
 \begin{proof}Suppose $n$ is even. Let  $x\in V(G)$. By the minimality of $n$,    $h_d(G)\ge h_d(G\less x) \ge \ceil{(n-1)/2}=\ceil{n/2}$, a contradiction. 
 \end{proof}

\noindent {\bf Claim\refstepcounter{counter}\label{clique}  \arabic{counter}.} $2\omega(G)\le\ceil{n/2}$ and 
  $7\le \omega(G) \leq \ceil{n/2} -7$.    Moreover, $\omega(G)  =\ceil{n/2} -7$ if and only if  $n=27$. 

\begin{proof}  By Theorem~\ref{t:omega}, $2\omega(G)\le\ceil{n/2}$. By Corollary~\ref{c:smallclique}, $\omega(G)\ge7$. It follows that 
\[7\le \omega(G) \leq \ceil{n/2} -\omega(G)\le \ceil{n/2}-7.\]
 Thus  $ \omega(G) = \ceil{n/2} -7=7$ if $n=27$. Next suppose $\omega(G)=\ceil{n/2} -7$. Since $2\omega(G)\le\ceil{n/2}$, we have $2\big(\ceil{n/2} -7\big)\le\ceil{n/2}$,  and so $n=27$.  
\end{proof}

\noindent {\bf Claim\refstepcounter{counter}\label{d-edge}  \arabic{counter}.}
  $G$ contains no dominating edge.
 
\begin{proof}
    Suppose there exists an edge $e=xy\in E(G)$ such that  $e$ is dominating. Let $D_1:=\{x,y\}$.   By the minimality of $n$, $G\less D_1$ contains a dominating clique minor of order $\ceil{n/2}-1$, say $(T_2, \ldots, T_{\ceil{n/2}})$. But then  $(G[D_1], T_2, \ldots, T_{\ceil{n/2}})$ yields a dominating clique minor of order $\ceil{n/2}$ in $G$, a contradiction.  
\end{proof}

\noindent {\bf Claim\refstepcounter{counter}\label{K}  \arabic{counter}.} 
For each non-empty clique $K$ in $G$, $K$ is not complete to $V(G)\less K$ in $G$, $G\less K$ is connected and  $\alpha(G\less K)=2$. 
\begin{proof}
 Let $K\ne\es$ be a clique in $G$. Suppose $K$ is  complete to $V(G)\less K$ in $G$. By the minimality of $n$, $G\less K$ contains a dominating clique minor of order $\ceil{(n-|K|)/2}$. This, together with $K$, yields a dominating  clique minor of order $\ceil{n/2}$ in $G$, a contradiction. 
Next, suppose  $G\less K$ is disconnected. 
 Then $V(G)\less K$ is the disjoint  union of two cliques, say, $A$ and $B$. Then each vertex in $K$ is either complete to $A$ or complete to $B$. Define
\[K':=\{u\in  K\mid u \text{ is   complete to } A\} \text{ and } K'':= K\less K'.\]
Then $K'\cup A$ and $K''\cup B$ are disjoint cliques in $G$. Thus $h_d(G)\ge\omega(G)\ge \ceil{n/2}$, a contradiction. This proves that $G\less K$ is connected. Since  $\omega(G)<\ceil{n/2}$ and $|G\less K|>\ceil{n/2}$, we see that   $\alpha(G\less K)=2$. 
\end{proof}

\noindent {\bf Claim\refstepcounter{counter}\label{nbr}  \arabic{counter}.} 
 For any $v\in V(G)$, $V(G) \less N[v]$  is a clique and $N(v)$ is not a clique. Furthermore, $\alpha(G[N(v)]) = 2$ and $G[N(v)]$ is connected.
\begin{proof}
Let $v\in V(G)$.  Since $\alpha(G)=2$, we see that   $V(G)\less N[v]$ is a clique. By Claim~\ref{K},   $N(v)$ is not a clique and  
 $\alpha(G[N(v)]) = 2$. Suppose  $G[N(v)]$ is disconnected. Then $N(v)$ is the disjoint union of two   cliques, say $A_1$ and $A_2$.  Thus   $ \omega(G)\ge\max\{|A_1|, |A_2|, |V(G)\less N[v]|\}\ge\ceil{(n-1)/3}$, contrary to Claim~\ref{clique}.  
\end{proof}

\noindent {\bf Claim\refstepcounter{counter}\label{degree}  \arabic{counter}.}
 $\max\{\ceil{n/2}+5, \ceil{(3n-5)/4}\}\le \delta(G)\le n-7$ and $\Delta(G)\le n-5$. 
 \begin{proof}
 By  Lemma~\ref{l:highmindeg}, $\delta(G)\le n-7$. Let  $v\in V(G)$ with $d(v)=\delta(G)$. By Claim~\ref{nbr}, $V(G) \less N[v]$  is a clique.  By Claim~\ref{clique}, $2|V(G) \less N[v]|\le 2\omega(G)\le \max\{\ceil{n/2}, 2(\ceil{n/2}-7)\}$. Thus 
 \[2(n-1)=2\left(d(v)+|V(G) \less N[v]|\right)\le  2d(v)+2\omega(G)\le  2d(v)+\max\{\ceil{n/2}, 2(\ceil{n/2}-7)\}.\] It follows that $\delta(G)\ge \max\{\ceil{n/2}+5, \ceil{(3n-5)/4}\}$. \medskip
 
 Next let  $x\in V(G)$ with $d(x)=\Delta(G)$.    By Claim~\ref{d-edge}, $d(x)\le n-2$.  Suppose $d(x)\ge n-4$.  Then   $|V(G)\less N[x]|\le3$. Since $\delta(G)\ge \ceil{(3n-5)/4}$ and $n\ge 27$, we see that there exists $y\in N(x)$ such that $y$ is complete to $V(G)\less N[x]$. Then  $xy$ is a dominating edge in $G$, contrary to Claim~\ref{d-edge}. 
 \end{proof}

\noindent {\bf Claim\refstepcounter{counter}\label{bowtie}  \arabic{counter}.} 
 $H \ne K_2+2K_2$.

\begin{proof}
 By Theorem~\ref{t:2K2}, $G$ contains an induced $2K_2$. Let    $X:=\{x_1, x_2, x_3, x_4\}\subseteq  V(G)$ such that $x_1x_2, x_3x_4\in E(G)$ and $G[X]=2K_2$. Let $Y:= V(G)\less X$. By Claim~\ref{degree}, $|N(x_i)|\ge (3n-5)/4$ for each $i\in[4]$. It follows that 
 \[|N(x_1)\cap N(x_2)|\ge |N(x_1)\cap Y|+|N(x_2)\cap Y| -|Y|\ge 2\big((3n-5)/4-1\big)-(n-4)=(n-1)/2.\]
  Similarly,  $|N(x_3)\cap N(x_4)|\ge (n-1)/2$. Let $Z:=N(x_1)\cap N(x_2)\cap N(x_3)\cap N(x_4)$. Then 
  \[|Z|\ge |N(x_1)\cap N(x_2)|+|N(x_3)\cap N(x_4)|-|Y|\ge (n-1)/2+(n-1)/2-(n-4)=3.\] Let $u, v\in Z$ such that $uv\in E(G)$.  Then $G[X\cup \{u, v\}]=K_2+2K_2$. Thus  $H\ne K_2+2K_2$.  
  \end{proof}

  \noindent {\bf Claim\refstepcounter{counter}\label{W4}  \arabic{counter}.} 
 $H \ne W_4$. 
 \begin{proof}
 Suppose   $H = W_4$.   By Lemma~\ref{l:C4},   $G$ contains an induced $C_4$, say with vertices $v_1, v_2, v_3, v_4$ in order.  Then no vertex in $V(G)\less V(C_4)$ is complete to $V(C_4)$ because $G$ is $W_4$-free. Since $\alpha(G)=2$, we see that each vertex in $V(G)\less V(C_4)$ is adjacent to  at least two consecutive vertices on $C_4$. For each $i\in[4]$, let 
 \[A_i:=\{v\in V(G)\less V(C_4)\mid N(v)\cap V(C_4)=V(C_4)\less\{v_i\} \}  \text{ and}\]
 \[ B_i:=\{v\in V(G)\less V(C_4)\mid vv_i, vv_{i+1}\notin E(G)\},\] where where all arithmetic on indices    is done modulo $4$.  Note that $A_1, A_2, A_3, A_4, B_1, B_2, B_3, B_4$ are pairwise disjoint cliques in $G$  and $\{A_1, A_2, A_3, A_4, B_1, B_2, B_3, B_4\}$ partitions $V(G)\less V(C_4)$.    Note that  $V_1:=\{v_1\}\cup B_2\cup A_3\cup B_3$, $V_2:=\{v_3\}\cup B_4\cup A_1\cup B_1$, $V_3:=\{v_1, v_4\}\cup   A_2$, and $V_4:=\{v_1, v_2\}\cup   A_4$  are   cliques in $G$ and $|V_1|+|V_2|+|V_3|+|V_4|=n+2$.   Thus 
 \[\omega(G)\ge \max\{|V_1|, |V_2|, |V_3|, |V_4|\}\ge   \ceil{(n+2)/4}\ge \ceil{(n+3)/4}\] since $n$ is odd, contrary to Claim~\ref{clique}.    
 \end{proof}
 
\noindent {\bf Claim\refstepcounter{counter}\label{W5}  \arabic{counter}.} 
 $H \ne W_5$. 
\begin{proof}
Suppose   $H = W_5$.   By Lemma~\ref{l:C5},   $G$ contains an induced $C_5$, say with vertices $v_1, v_2, v_3, v_4, v_5$ in order.  Then no vertex in $V(G)\less V(C_5)$ is complete to $V(C_5)$ because $G$ is $W_5$-free. Since $\alpha(G)=2$, we see that each vertex in $V(G)\less V(C_5)$ is adjacent to  at least three consecutive vertices on $C_5$. For each $i\in[5]$, let 
 \[A_i:=\{v\in V(G)\less V(C_5)\mid N(v)\cap V(C_5)=V(C_5)\less\{v_i\} \}  \text{ and}\]
 \[ B_i:=\{v\in V(G)\less V(C_5)\mid vv_i, vv_{i+1}\notin E(G)\},\] where where all arithmetic on indices    is done modulo $5$.  Note that $A_1, \ldots, A_5, B_1, \ldots, B_5$ are pairwise disjoint cliques in $G$  and $\{A_1, \ldots, A_5, B_1, \ldots, B_5\}$ partitions $V(G)\less V(C_5)$.  Since $\alpha(G) \leq 2$, we see that  $A_i$ is complete to $B_{i-1} \cup B_i$ and $B_i$ is complete to $A_i \cup A_{i+1} \cup B_{i-1} \cup B_{i+1}$. We next show that each vertex in $A_1$ is complete to $A_3$ or $A_5$.  Suppose there exist 	 
		 $a_1\in A_1$, $a_3\in A_3$ and $a_5\in A_5$ such that $a_1$ is anticomplete to $\{a_3, a_5\}$.  Then   $a_3a_5\in E(G)$ and $G[v_5,a_1,v_3,a_5, a_3, v_4]=W_5$, a contradiction. This proves that each vertex in $A_1$ is complete to $A_3$ or $A_5$. 
		Let 
\[A_1^3 := \{v \in A_1\mid  v \text{ is  complete to } A_3 \}  \text{ and }  A_1^5 := A_1 \setminus A_1^3.\] Then  $A_1^5$ is complete to $A_5$.  Note that 	 	  
$V_1 := A_1^5 \cup A_5 \cup B_5 \cup \{v_2,v_3\}$, 
 $V_2 := A_1^3 \cup A_3 \cup \{v_4,v_5\}$, 
 $V_3 := A_2 \cup B_1 \cup B_2\cup \{v_4,v_5\}$, and 
 $V_4 := A_4 \cup B_3 \cup B_4 \cup \{v_1, v_2\}$ 
are   cliques in $G$ and $|V_1|+|V_2|+|V_3|+|V_4|=n+3$.   Thus 
 \[\omega(G)\ge \max\{|V_1|, |V_2|, |V_3|, |V_4|\}\ge   \ceil{(n+3)/4},\] contrary to Claim~\ref{clique}.   
		\end{proof}

\noindent {\bf Claim\refstepcounter{counter}\label{H6*}  \arabic{counter}.}
  $H \ne W_5^<$. 
 \begin{proof}
   Suppose $H= W_5^<$. Then  $G$ is $W_5^<$-free. By Lemma~\ref{l:C5},   $G$ contains an induced $C_5$, say with vertices $v_1, v_2, v_3, v_4, v_5$ in order.  Let $X:=\{v_1, v_2, v_3, v_4, v_5\}$ and let \[J:=\{v\in V(G)\less X\mid v \text{ is complete to } X\}.\]  
   Since $G$ is $W_5^<$-free, no vertex in $V(G)\less (X\cup J)$ is adjacent to exactly three consecutive vertices on $C_5$. It follows that each vertex in $V(G)\less (X\cup J)$ is adjacent to  exactly  four vertices on $C_5$. For each $i\in[5]$, let $X_i:=\{v\in V(G)\less X\mid vv_i\notin E(G)\}$.   Then $\{X, X_1, \ldots, X_5, J\}$ partitions $V(G)$.  For each $i\in[5]$, since  $\delta(G) \leq n-7$,  we see that each $X_i$ is a clique of order at least four. 
 For the remainder of the proof of Claim~\ref{H6*}, all arithmetic on indices is done modulo $5$. For each $i,j\in[5]$ with $i\ne j$, we use $ G[X_i, X_j]$ to denoted the bipartite graph with bipartition $\{X_i, X_j\}$ such that  $ E(G[X_i, X_j])$ consists of  all edges in $G$ with one end in $X_i$ and the other in $X_j$. Let $k:=|X_1|$. We claim that \medskip
 
 \noindent ($*$)  For each $i \in [5]$,  $|X_i|=k$ and $G[X_i, X_{i+1}]$ is $(k-1)$-regular. Furthermore, $X_i$ is complete to $X_{i-2}\cup X_{i+2}$ and no vertex in $J$ is complete to $X_i$. 
 
Suppose there exists   $x_i\in X_i$ such that $x_i$ is complete to $X_{i + 1}$. Then $x_iv_{i + 1} $ is a dominating edge in $G$, contrary to Claim~\ref{d-edge}. Let $x_i\in X_i$ and $x_{i+1}\in X_{i+1}$ such that   $x_ix_{i +1} \notin E(G)$.  Then $G[\{x_i, v_{i+1},v_i, x_{i+1},v_{i-2}\}]=C_5$. Since $G$ is $W_5^<$-free, every   vertex  $v\in X_{i+1}$ with $v\ne  x_{i+1}$ is adjacent  to exactly four vertices in $G[\{x_i, v_{i+1},v_i, x_{i+1},v_{i-2}\}]  $; in particular,    $vx_i\in E(G)$. Thus $x_i$ is complete to $ X_{i+1}\less x_{i+1}$ and so each vertex in $X_i$ is adjacent to exactly $|X_{i+1}|-1$ vertices in $X_{i+1}$. By symmetry,  each vertex in $X_{i+1}$ is adjacent to exactly $|X_{i}|-1$ vertices in $X_i$. Thus  $|X_i| = k$  and $G[X_i, X_{i+1}]$ is $(k-1)$-regular.    Suppose next some vertex $x_i\in X_i$ is not complete to $X_{i-2}\cup X_{i+2}$.  By symmetry, we may assume that there exists $x_{i+2}\in X_{i+2}$ such that $x_ix_{i+2}\notin E(G)$.  Let $x_{i-1}\in X_{i-1}$ be the unique non-neighbor of $x_i$ in $X_{i-1}$.   Then $x_{i-1}x_{i+2}\in E(G)$ and $G[\{x_{i-1},x_{i+2},v_{i-1},x_i,v_{i+2}, v_i\}]=W_5^<$, a contradiction. Thus $X_i$ is complete to $X_{i-2}\cup X_{i+2}$.  Finally, suppose   there exists a vertex $u \in J$ such that $u$ is  complete to $X_i$. Then   $uv_i$  dominates $G$, a contradiction. This proves ($*$). \medskip 

We next claim that \medskip
			 
 \noindent ($**$)  For each $i \in [5]$,  every  edge in $  G[X_i]$ dominates $V(G)\less X$.  Moreover,  for each vertex  $x_i\in X_i$, let  $x_{i-1}\in X_{i-1}$ and $x_{i+1}\in X_{i+1}$ such that $x_ix_{i-1}, x_ix_{i+1}\notin E(G)$. Then the edge $x_{i-1}x_{i+1}$ dominates $V(G)\less (X\cup\{x_i\})$.  \medskip
			 
Let $e=x_ix_i'\in E(G[X_i])$. By ($*$),   $e$ dominates $ X_1, \ldots, X_5$.  Suppose there exists a vertex $u \in J$ such that $u$ is anticomplete to $\{x_i, x_i'\}$. By ($*$), let $x_{i+2}\in X_{i+2}$ such that $ux_{i+2}\notin E(G)$.  Then  $ G[\{v_i,u,v_{i+2},x_i,x_{i+2}, x_i'\}]=W_5^<$, a contradiction. Next  let  $x_i\in X_i$,   $x_{i-1}\in X_{i-1}$ and $x_{i+1}\in X_{i+1}$ such that $x_ix_{i-1}, x_ix_{i+1}\notin E(G)$.  Then $x_{i-1}x_{i+1}\in E(G)$. Note that $x_{i-1}$ is complete to  $X_{i+1} \cup X_{i+2} \cup \left(X_i \setminus \{x_i\}\right)$ and $x_{i+1}$ is complete to  $X_{i-1} \cup X_{i-2}$. It remains to show that the edge $x_{i-1}x_{i+1}$ dominates $J$. Suppose there exists a vertex $u \in J$ such that $u$ is anticomplete to $\{x_{i-1}, x_{i+1}\}$.  Then $ux_i\in E(G)$ and $ G[\{v_{i-1},x_i,v_{i+1},x_{i-1},x_{i+1}, u\}]=W_5^<$, a contradiction. This proves ($**$).\medskip	
			 
For each $i\in [4]$, let $X_i:=\{x^i_1, \ldots, x^i_k\}$ such that $x_\ell^2$ is anticomplete to $\{x_\ell^1,x_\ell^3\}$ for each $\ell\in[k]$. This is possible because $G[X_2, X_1]$ and $G[X_2, X_3]$ are $(k-1)$-regular bipartite graphs by ($*$). By ($**$),  $ x^1_\ell x^3_\ell$  dominates $V(G)\less(X\cup x_\ell^2)$.  Let $Y:=(X\less v_5)\cup X_1\cup X_2\cup X_3\cup X_4$. Then $|Y|=4k+4$ and $\{Y, X_5\cup\{v_5\}, J\}$ partitions $V(G)$ and $v_1$ is complete to $X_5\cup\{v_5\}\cup J$. By the minimality of $n$, $G\less Y$ has a dominating clique minor of order $\ceil{n/2}-(2k+2)$, say, $(T_{2k+3}, \ldots, T_{\ceil{n/2}})$. Recall that $k=|X_1|\ge4$. Let $k:=2p+r$, where $p\ge2$ is an integer and $r\in\{0,1\}$. Let $Y_1 := \{v_2, v_3, v_4\}$ when $k$ is even,  and let $Y_1 := \{v_2, v_3, x^4_k\}$ and $Y_2 := \{v_4, x^2_k\}$ when $k$ is odd.  For  each $j\in[p]$,   let  $Y_{1+r+j}:=\{x^2_{2j-1}, x^2_{2j}\}$ and $Y_{p+1+r+j}:=\{x^4_{2j-1}, x^4_{2j}\}$.   For each $\ell\in [k]$, let  $Y_{k+1+\ell}:=\{x^1_\ell, x^3_\ell\}$. Finally, let $Y_{2k+2} := \{v_1\}$. Then $Y_1, \ldots, Y_{2k+2}$ are pairwise disjoint and $\{Y_1, \ldots, Y_{2k+2}\}$ partitions $Y$. By ($*$) and ($**$),   $(G[Y_1], \ldots, G[Y_{2k+2}])$ yields a dominating  $K_{2k+2}$ minor in $G$, and each vertex in  $V(G)\less Y$ is adjacent to at least one vertex in $Y_i$ for each $i\in [2k+2]$.  Thus $(G[Y_1], \ldots, G[Y_{2k+2}], T_{2k+3}, \ldots, T_{\ceil{n/2}})$ yields a dominating  clique minor of order $\ceil{n/2}$ in $G$, a contradiction. 	Thus $H \ne W_5^<$. 	\end{proof}

Throughout  the remainder of the proof, let $x, y$ be two non-adjacent vertices in $G$.  Since $\alpha(G)=2$, we see that every vertex in $V(G)\less \{x,y\}$ is adjacent to either $x$ or $y$. Let $A: =N(x)\backslash N(y) $, $B: =N(x)\cap N(y)$ and $C: =N(y)\backslash N(x) $.  By Claim~\ref{nbr},  both $A\cup\{x\}$ and $C\cup\{y\}$ are cliques in $G$.  \medskip

\noindent {\bf Claim\refstepcounter{counter}\label{ABC}  \arabic{counter}.}  
  $|B|\ge n-2\omega(G)\ge \max\{\lfloor n/2\rfloor, 14\}$ and $B$ is not a clique.   
\begin{proof}
   We may assume that $d(x)\le d(y)$. By Claim~\ref{degree}, $|C|\ge   |A|\ge3$.  By Claim~\ref{clique},  \[ |A \cup \{ x \}|+|C \cup \{ y \}|  \le 2\omega(G)\le \ceil{n/2}.\]
     Thus  
   \[|B|=n-(|A \cup \{ x \}|+|C \cup \{ y \}|) \ge n-2\omega(G)\ge n- \ceil{n/2}=\lfloor n/2\rfloor \ge13,\]
    since $n\ge27$. Suppose $|B|=13$. Then $n=27$ and $ |A \cup \{ x \}|+|C \cup \{ y \}|  = 2\omega(G)= \ceil{n/2}$.  Thus  $|A|=|C|=6$ and $d(x)=d(y)=19$. By the arbitrary choice of $x$ and $y$, $G$ is $19$-regular on $27$ vertices, which is impossible.  This proves that $|B|\ge14$. 
    Since $|B|\ge \lfloor n/2\rfloor$ and $2\omega(G)\le \ceil{n/2}$, we see that $B$ is not a clique in $G$.   
  \end{proof}

\noindent {\bf Claim\refstepcounter{counter}\label{2K1P4}  \arabic{counter}.}
  $H\ne 2K_1+P_4$. 
 \begin{proof} Suppose $H=2K_1+P_4$. Then $G[B]$ is $P_4$-free, else $G[B\cup\{x,y\}]$ is not $H$-free.  Thus $G[B]$ is perfect by the Strong Perfect Graph Theorem~\cite{SPGT} and so $\omega(G[B])=\chi(G[B])\ge \ceil{|B|/2}$. By Claim~\ref{ABC}, $|B|\ge n-2\omega(G)$. Thus $\omega(G[B])\ge \ceil{(n-2\omega(G)/2}=\ceil{n/2}-\omega(G)$. Then $\omega(G)\ge \omega(G[B])+1\ge\ceil{n/2}-\omega(G)+1$, which contradicts   Claim~\ref{clique}. 
 \end{proof}

\begin{figure}[ht]
    \centering
    \begin{minipage}{0.2\textwidth}
        \centering
       
\begin{tikzpicture}[scale=1.2, every node/.style={circle, fill=black, inner sep=2pt}]
    \node (v1) at (0,1) {};
    \node (v2) at (1,1) {};
    \node (v3) at (2,1) {};
    \node (v4) at (0,0) {};
    \node (v5) at (1,0) {};
    \node (v6) at (2,0) {};
    \node (v7) at (0.5,0.5) {};
    \node (v8) at (1.5,0.5) {};

    \foreach \i/\j in {v1/v2,v2/v3,  v4/v5, v5/v6,  v1/v4,   v3/v6, v1/v7, v2/v7,v4/v7, v5/v7, v2/v8,v3/v8, v5/v8, v6/v8}
        \draw[thick] (\i) -- (\j);

\draw[thick, bend left=30] (v1) to (v3);
\draw[thick, bend right=30] (v4) to (v6);
    
\end{tikzpicture}

    \end{minipage}
    \hskip 1cm
    \begin{minipage}{0.2\textwidth}
        \centering
       
\begin{tikzpicture}[scale=1.2, every node/.style={circle, fill=black, inner sep=2pt}]
    \node (v1) at (0,1) {};
    \node (v2) at (1,1) {};
    \node (v3) at (2,1) {};
    \node (v4) at (0,0) {};
    \node (v5) at (1,0) {};
    \node (v6) at (2,0) {};
    \node (v7) at (0.5,0.5) {};
    \node (v8) at (1.5,0.5) {};

    \foreach \i/\j in {v1/v2,v2/v3,  v4/v5, v5/v6,  v1/v4,   v3/v6, v1/v7, v2/v7,v4/v7, v5/v7, v2/v8,v3/v8, v5/v8, v6/v8, v3/v4}
        \draw[thick] (\i) -- (\j);

\draw[thick, bend left=30] (v1) to (v3);
\draw[thick, bend right=30] (v4) to (v6);

\end{tikzpicture}

    \end{minipage}
     \hskip 1cm
    \begin{minipage}{0.2\textwidth}
        \centering
            
\begin{tikzpicture}[scale=1.2, every node/.style={circle, fill=black, inner sep=2pt}]
    \node (v1) at (0,1) {};
    \node (v2) at (1,1) {};
    \node (v3) at (2,1) {};
    \node (v4) at (0,0) {};
    \node (v5) at (1,0) {};
    \node (v6) at (2,0) {};
    \node (v7) at (0.5,0.5) {};
    \node (v8) at (1.5,0.5) {};

    \foreach \i/\j in {v1/v2,v2/v3,  v4/v5, v5/v6,  v1/v4,   v3/v6, v1/v7, v2/v7,v4/v7, v5/v7, v2/v8,v3/v8, v5/v8, v6/v8, v3/v4, v1/v6}
        \draw[thick] (\i) -- (\j);

\draw[thick, bend left=30] (v1) to (v3);
\draw[thick, bend right=30] (v4) to (v6);
 
    \end{tikzpicture}
    
    \end{minipage}
     \caption{Three graphs on eight vertices}
    \label{f:H8}
\end{figure}
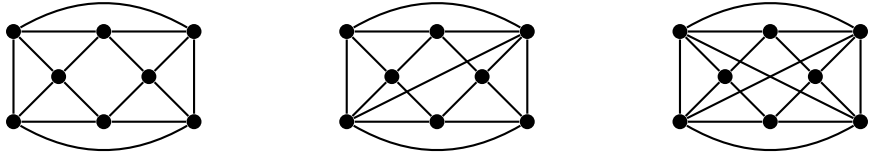

\noindent {\bf Claim\refstepcounter{counter}\label{W4-}  \arabic{counter}.} 
$G$ contains one of the graphs in Figure~\ref{f:H8} as an induced subgraph. Moreover, $H\ne W_5^-$ and $H\ne W_4^-$.
 \begin{proof}
    By Claim~\ref{ABC}, $|B|\ge \max\{\lfloor n/2\rfloor, 14\}$ and $B$ is not a clique in  $G$. Let $t\ge1$ be the largest integer such that there exist pairwise distinct vertices $z_1, \ldots, z_t, w_1, \ldots, w_t\in B$ such that $z_iw_i\notin E(G)$. Since all the edges $xz_1, xw_1, yz_1, yw_1$ are not dominating, there exist $z', w'\in A$ and $z'', w''\in C$ such that $z_1$ is anticomplete to $\{z', z''\}$ and $w_1$ is anticomplete to $\{w', w''\}$. Then $z'z'', w'w''\in E(G)$, $w_1$ is  complete to $\{z', z''\}$ and $z_1$ is complete to $\{w', w''\}$. It follows that $z'\ne w'$ and $z''\ne w''$. Then $G[\{x, y, z_1, w_1, z', z'', w', w''\}]$ is isomorphic to one of the graphs in  Figure~\ref{f:H8} depending on whether   $w'$ is adjacent to $z''$ and $z'$ is adjacent to $w''$ in $G$. Thus $H\ne W_5^-$ because $G[\{y, z_1,w',z', w_1, x\}]=W_5^-$. 
    Suppose $H= W_4^-$. We choose $x$ with $d(x)=\delta(G)$. Note that $G[\{x, y, z_i,w_i\}]=C_4$ for all $i\in [t]$. Then each vertex in $A\cup C$ is adjacent to exactly one vertex in  $\{z_i,w_i\}$, and each vertex in $B\less\{z_i,w_i\}$ is complete to $\{z_i, w_i\}$.  Let $Z:=\{z_1, \ldots, z_t\}$ and $W:=\{w_1, \ldots, w_t\}$. By symmetry, we may assume that $z'$ is anticomplete to $Z$. By the choice of $t$, $B\less (Z\cup W)$ is a clique in $G$.  Thus   $(B\less W)\cup\{x\}$ is a clique in $G$ and so $\omega(G)\ge |(B\less W)\cup\{x\}|=|B|-t+1$. By the choice of $x$, $d(z')\ge d(x)$. Note that $z'$ is anticomplete to $Z\cup \{y\}$. It follows that $t\le |C|\le \omega(G)-1$. But then 
    \[\omega(G) \ge |B|-t+1\ge(n-2-|A|-|C|)-|C|+1\ge n-1-3|C|\ge n-1-3(\omega(G)-1),\] which implies that $2\omega(G)\ge \ceil{n/2}+1$, contrary to Claim~\ref{clique}.    
 \end{proof}

\noindent {\bf Claim\refstepcounter{counter}\label{K1K5}  \arabic{counter}.} 
$H \ne  K_2 + (K_1\cup K_3)$. 

\begin{proof}  Suppose $H = K_2 + (K_1\cup K_3)$. We choose $x$ with $d_G(x)=\delta(G)$. Then  $|C|\ge5$. 
  For $z, w\in C$ with $z\ne w$, if there exist $u, v \in B\cap N(z)\cap N(w)$ such that $uv\in E(G)$,  then   $G[\{u, v,   x, y, z,w\}  ]=H$, a contradiction.  Thus $|B\cap N(z)\cap N(w)|\le2$ because  $\alpha(G)=2$.  It follows that  $e_G(\{z, w\}, B)\le |B|+2$.   
 Let \[B_1:= \{b \in B\mid b \text{ is anticomplete to } C\} \text { and } B_2:  = B\less B_1.\]
 Then $B_1$ is a clique. By Claim~\ref{nbr} applied to $G[N(y)]$,  $|B_2|\ge1$. Note that every vertex $b \in B_2$ is adjacent to  at least $|C|-2$  vertices  in $C$, else $G[\{b, y\} \cup C]$ is not  $H$-free.    Recall that for each pair of vertices $z,w$ in  $C$, $e_G(\{z, w\}, B_2)\le |B_2|+2$. Then some vertex in $B_2$ is adjacent to at most $(|B_2|+2)/2$ vertices in $C$ and so
 \[(|C|-2)|B_2|\le e_G(B_2, C)\le    \lfloor{|C|/2}\rfloor(|B_2|+2)+(\ceil{|C|/2}-\lfloor{|C|/2}\rfloor)(|B_2|+2)/2.\]
  It follows that $|B_2|\le  10$ because  $|C|\ge 5$. Then $|B_1|\ge 4$ by Claim~\ref{ABC}. Note that  $B_1$ and $C$ are two disjoint cliques and $B_1$ is anticomplete to $C$. We  now partition $A$ into $A'$ and $A''$ such that $A'\cup B_1$ and $A''\cup C$ are two disjoint cliques in $G$. But then   
  \[\omega(G)\ge \max\{|A'\cup B_1|, |A''\cup C|\}\ge \ceil{(n-|B_2|-2)/2}\ge \ceil{n/2}-6,\] contrary to Claim~\ref{clique}.  
 \end{proof}

\noindent {\bf Claim\refstepcounter{counter}\label{K1K5}  \arabic{counter}.} 
$H \ne K_1 + (K_1\cup K_5)$. 

\begin{proof}  
Suppose $H =K_1 + (K_1\cup K_5)$. We choose  $x$ with $d_G(x)=\delta(G)$.  Then $\delta(G)=|A|+|B|$ and  $|C|\ge5$.  
 Let $b\in B$.  If  $b$ is   not adjacent to at least five vertices in $B\cup C$, then $G[N[y]]$ is not  $H$-free. Thus $b$ is not adjacent to at most four vertices in   $B\cup C$.   Moreover, if 
  $b$ is adjacent to  four vertices in $C$, say $c_1, c_2, c_3, c_4$, then $G[\{b, x, y, c_1, c_2, c_3, c_4 \}]=H$, a contradiction. Thus $b$ is adjacent to at most three vertices in $C$.   Thus   $  |C|\le 7$  and 
  \[3(\delta(G)-|A|) =3|B|\ge e_G(B,   C)\ge |C|(\delta(G)-|C|-|A|).\] 
It follows that $(|C|-3)\delta(G)\le  (|C|-3)|A|+|C|^2$, and so 
\[ |A|+|B|=\delta(G) \le |A|+|C|+ \left\lfloor 3|C|/(|C|-3)\right\rfloor.\]
 Since  $5\le |C|\le 7$, we see that $|B|\le |C|+ \left\lfloor 3|C|/(|C|-3)\right\rfloor\le 12$, contrary to Claim~\ref{ABC}. 
 \end{proof}

 It remains to consider the cases $H\in\{K_7^<, K_7^-, K_7\}$. By Claim~\ref{clique}, $H\ne K_7$. By Claim~\ref{ABC}, $|B|\ge14$.   Since $R(3,5) = 14$, we see that  $G[B\cup\{x, y\}]$ contains an induced copy of $K_7^-$. Thus $H\ne K_7^-$.  Suppose $H=K_7^<$. We choose $x$ with $d(x)=\delta(G)$. Subject to the choice of $x$,  we choose $y$ with $d(y)$ minimum. 
 Let $z\in  C$. Suppose    $|N(z)\cap B|\ge9$. Then $G[N(z)\cap B]$ contains a $K_4$ subgraph since $R(3,4)=9$. Let $D$ be a clique of order four in $G[N(z)\cap B]$. Then $G[D\cup\{x, y, z\}]=K_7^<$, a contradiction.  Thus $|N(z)\cap B|\le8$ and so by the choice of $y$, 
 \[ |B|+|C|=d(y)\le d(z)\le 8+|C|+|A|.\]
   It follows that $|B|\le 8+|A|$.  Suppose $|B|\le |A|+6$. Then 
   \[\ceil{n/2}+5\le \delta(G)=|A|+|B|\le 6+2|A|\le 6+2(\omega(G)-1)\le 4+\ceil{n/2},\]
    which is impossible. Thus $7+|A|\le |B|\le 8+|A|$. Suppose next $|A|\le |C|-1$. Then $\omega(G)\ge |C|+1\ge |A|+2$ and so  
    \[\ceil{n/2}+5\le \delta(G)=|A|+|B|\le 8+2|A| \le 8+2(\omega(G)-2)\le 4+\ceil{n/2},\]
     which again is impossible. Thus $|A|=|C|$ and $d(y)=d(x)=\delta(G)$. Finally, suppose $|C|\le \omega(G)-2$. Then $\ceil{n/2}+5\le \delta(G)\le d(y)=|B|+|C|\le 8+2|C|\le 8+2(\omega(G)-2)\le 4+\ceil{n/2}$, a contradiction. Thus $|A|=|C|= \omega(G)-1$ and so $A\cup\{x\}$ is a maximum clique in $G$. Thus $z$ is not complete to $A$ and so 
   \[7+2|C|\le |B|+|C|=d(y)\le d(z)\le 8+|C|+(|A|-1)=7+2|C|.\]
   Then $|B|= 7+|C|$ and  $d(z)=d(y)=\delta(G)$.  Thus 
   \[n=|A\cup\{x\}|+|B|+ |C\cup\{y\}|=9+3|C|=6+3\omega(G)\le   6+1.5\times  \ceil{n/2},\] which implies that $n=27$ and $\omega(G)=7$. It follows that $|B|=7+|C|=13$, contrary to Claim~\ref{ABC}.
    \medskip
 
 This completes the proof of Theorem~\ref{t:main}.
 \end{proof}
 
 \noindent{\bf Acknowledgements.} Zi-Xia Song   thanks the Banff International Research Station and the organizers of  the ``New Perspectives in Colouring and Structure'' workshop, held  in Banff from September 29 - October 4, 2024, for their  kind  invitation and hospitality.


\begin{thebibliography}{CRST06}

\bibitem[AH77]{AH77}
K.~Appel and W.~Haken.
\newblock Every planar map is four colorable. {I}. {D}ischarging.
\newblock {\em Illinois J. Math.}, 21(3):429--490, 1977.

\bibitem[AHK77]{AHK77}
K.~Appel, W.~Haken, and J.~Koch.
\newblock Every planar map is four colorable. {II}. {R}educibility.
\newblock {\em Illinois J. Math.}, 21(3):491--567, 1977.

\bibitem[Bos19]{Bosse}
Christian Bosse.
\newblock A note on {H}adwiger's conjecture for {$W_5$}-free graphs with
  independence number two.
\newblock {\em Discrete Math.}, 342(12):111595, 5, 2019.

\bibitem[CRST06]{SPGT}
Maria Chudnovsky, Neil Robertson, Paul Seymour, and Robin Thomas.
\newblock The strong perfect graph theorem.
\newblock {\em Ann. of Math. (2)}, 164(1):51--229, 2006.

\bibitem[CS12]{seagull}
Maria Chudnovsky and Paul Seymour.
\newblock Packing seagulls.
\newblock {\em Combinatorica}, 32(3):251--282, 2012.

\bibitem[Dir52]{Dirac52}
G.~A. Dirac.
\newblock A property of {$4$}-chromatic graphs and some remarks on critical
  graphs.
\newblock {\em J. London Math. Soc.}, 27:85--92, 1952.

\bibitem[DP25]{DP25}
Michelle Delcourt and Luke Postle.
\newblock Reducing linear {H}adwiger's conjecture to coloring small graphs.
\newblock {\em J. Amer. Math. Soc.}, 38(2):481--507, 2025.

\bibitem[Gal63]{Gallai63}
T.~Gallai.
\newblock Kritische graphen ii.
\newblock {\em Publ. Math. Inst. Hung. Acad. Sci., Ser. A}, 8:373--395, 1963.

\bibitem[Had43]{Had43}
H.~Hadwiger.
\newblock \"{U}ber eine {K}lassifikation der {S}treckenkomplexe.
\newblock {\em Vierteljschr. Naturforsch. Ges. Z\"{u}rich}, 88:133--142, 1943.

\bibitem[IW]{IW24}
Freddie Illingworth and David~R. Wood.
\newblock Dominating {$K_t$}-model.
\newblock arXiv:2405.14299.

\bibitem[Kos82]{Kostochka82}
Alexandr~V. Kostochka.
\newblock The minimum {H}adwiger number for graphs with a given mean degree of
  vertices.
\newblock {\em Metody Diskret. Analiz.}, (38):37--58, 1982.

\bibitem[Kos84]{Kostochka84}
Alexandr~V. Kostochka.
\newblock Lower bound of the {H}adwiger number of graphs by their average
  degree.
\newblock {\em Combinatorica}, 4(4):307--316, 1984.

\bibitem[Kri10]{Kriesell}
M.~Kriesell.
\newblock On {S}eymour's strengthening of {H}adwiger's conjecture for graphs
  with certain forbidden subgraphs.
\newblock {\em Discrete Mathematics}, 310:2714--2724, April 2010.

\bibitem[Nor]{Norin24}
Sergey Norin.
\newblock {\em New Perspectives in Colouring and Structure, September 29 --
  October 4, 2024, Banff International Research Station, Banff, Alberta,
  Canada}.

\bibitem[Nor23]{Norinsurvey}
Sergey Norin.
\newblock Recent progress towards {H}adwiger's conjecture.
\newblock In {\em I{CM}---{I}nternational {C}ongress of {M}athematicians.
  {V}ol. 6. {S}ections 12--14}, pages 4606--4620. EMS Press, Berlin, [2023]
  \copyright 2023.

\bibitem[NPS23]{NPS23}
Sergey Norin, Luke Postle, and Zi-Xia Song.
\newblock Breaking the degeneracy barrier for coloring graphs with no {$K_t$}
  minor.
\newblock {\em Adv. Math.}, 422:Paper No. 109020, 23, 2023.

\bibitem[PST03]{PST03}
Michael~D. Plummer, Michael Stiebitz, and Bjarne Toft.
\newblock On a special case of {H}adwiger's conjecture.
\newblock {\em Discuss. Math. Graph Theory}, 23(2):333--363, 2003.

\bibitem[RST93]{RST93}
Neil Robertson, Paul Seymour, and Robin Thomas.
\newblock Hadwiger's conjecture for {$K_6$}-free graphs.
\newblock {\em Combinatorica}, 13(3):279--361, 1993.

\bibitem[Sey16]{Seymoursurvey}
P.~Seymour.
\newblock {\em Hadwiger's {C}onjecture}, chapter~13, pages 417--437.
\newblock Springer, Cham, 2016.
\newblock In: Open Problems in Mathematics (edited by J. Nash Jr. and M.
  Rassias).

\bibitem[ST]{SongTibbetts25}
Zi-Xia Song and Thomas Tibbetts.
\newblock Dominating {H}adwiger's {C}onjecture holds for all {$2K_2$}-free
  graph.
\newblock 	arXiv:2510.12567.

\bibitem[Tho84]{Thomason84}
Andrew Thomason.
\newblock An extremal function for contractions of graphs.
\newblock {\em Math. Proc. Cambridge Philos. Soc.}, 95(2):261--265, 1984.

\bibitem[Tof96]{Toft}
B.~Toft.
\newblock A survey of hadwiger's conjecture.
\newblock {\em Surveys in Graph Theory}, Congr. Numer. 115:249--283, 1996.
\newblock edited by G. Chartrand and M. Jacobson.

\bibitem[Wag37]{Wagner37}
K.~Wagner.
\newblock \"{U}ber eine {E}igenschaft der ebenen {K}omplexe.
\newblock {\em Mathematische Annalen}, 114:570--590, 1937.

\end{thebibliography}
\end{document}